\theoremstyle{plain}
\newtheorem{proposition}{Proposition}
\newtheorem{lemma}[proposition]{Lemma}
\newtheorem{theorem}[proposition]{Theorem}
\newtheorem{corollary}[proposition]{Corollary}
\theoremstyle{definition}
\newtheorem{definition}[proposition]{Definition}
\theoremstyle{remark}
\author{Alejandro Contreras-Balbuena\affiliationmark{1,3}\thanks{Corresponding author.}
  \and Hortensia Galeana-S\'{a}nchez\affiliationmark{1}
  \and Ilan A. Goldfeder\affiliationmark{1,2}}
\title{Alternating Hamiltonian cycles in $2$-edge-colored multigraphs\thanks{This research was supported by grants UNAM-DGAPA-PAPIIT IN104717 and CONACyT CB-2013/219840. The first and third authors were supported by postdoctoral positions under the grant CONACyT CB-2013/219840 at Instituto de Matemáticas at the Universidad Nacional Autónoma de México.}}
\affiliation{
  % one line per affiliation, no postal codes, grant numbers or similar
  Instituto de Matem\'{a}ticas, Universidad Nacional Aut\'{o}noma de M\'{e}xico, M\'{e}xico\\
  Departamento de Matem\'{a}ticas, Universidad Aut\'{o}noma Metropolitana, Iztapalapa, M\'{e}xico\\
  Universidad del Istmo, Campus Tehuantepec, M\'{e}xico}
\keywords{edge-colored multigraphs, Hamiltonian cycles, alternating Hamiltonian cycles}
\begin{document}
\publicationdetails{21}{2019}{1}{12}{4882}
\maketitle
\begin{abstract}
A path (cycle) in a $2$-edge-colored multigraph is alternating if no two consecutive edges have the same color. The problem of determining the existence of alternating Hamiltonian paths and cycles in $2$-edge-colored multigraphs is an $NP$-complete problem and it has been studied by several authors. In Bang-Jensen and Gutin's book \textit{Digraphs: Theory, Algorithms and Applications}, it is devoted one chapter to survey the latest results on this topic. Most results on the existence of alternating Hamiltonian paths and cycles concern on complete and bipartite complete multigraphs and a few ones on multigraphs with high monochromatic degrees or regular monochromatic subgraphs. In this work, we use a different approach imposing local conditions on the multigraphs and it is worthwhile to notice that the class of multigraphs we deal with is much larger than, and includes, complete multigraphs, and we provide a full characterization of this class.

Given a $2$-edge-colored multigraph $G$, we say that $G$ is \emph{$2$-$\mathcal{M}$-closed} (resp. \emph{$2$-$\mathcal{NM}$-closed)} if for every monochromatic (resp. non-monochromatic) $2$-path $P=(x_1$, $x_2$, $x_3)$, there exists an edge between $x_1$ and $x_3$. In this work we provide the following characterization:
\begin{quote}
A $2$-$\mathcal{M}$-closed multigraph has an alternating Hamiltonian cycle if and only if it is color-connected and it has an alternating cycle factor.
\end{quote}
Furthermore, we construct an infinite family of $2$-$\mathcal{NM}$-closed graphs, color-connected, with an alternating cycle factor, and with no alternating Hamiltonian cycle.
\end{abstract}

A trail $T$ in a $2$-edge-colored multigraph is \emph{alternating} if no two consecutive edges of $T$ have the same color. A great number of problems have been modelled by using edge-colored multigraphs. \cite{BanGut97} suggested that \cite{Pet1891}’s paper seems to be the first place where one can find applications of alternating colored trails. Moreover, there are applications of alternating colored trails in genetics, \cite{AliKarNew95, Dor87, Dor94, DorTim87, Pev95}, transportation and connectivity problems, \cite{GouLyrMar2010, WirSte2010}, social sciences, \cite{ChoManMeg93}, and graphs models for conflicts resolutions, \cite{XuKilHip2010, XuLiHip2009, XuLiKil2009}.

The problem of determining the existence of alternating Hamiltonian paths and cycles in $2$-edge-colored multigraphs is $NP$-complete (Section~16.7 of \cite{BanGut2009}). There is an extensive literature on alternating colored trails. Chapter~16 of~\cite{BanGut2009} is devoted to the topic, there is a survey in~\cite{KanLi2008} and more recent papers include~\cite{AboDasFar2008, AboDasFer2010, GorPop2012, GouLyrMar2012, GutJonShe2017a, GutJonShe2017b} and~\cite{Lo2014a, Lo2014b, Lo2016}.

It is worthwhile to mention the relationship between directed graphs and $2$-edge-colored multigraphs for Hamiltonian characterizations. For $2$-edge-colored multigraphs, two key ingredients are alternating cycle-factors and color-connectivity. These are the corresponding colored notions of cycle factors and strong connectivity for digraphs. In fact, many notions and results on alternating paths and cycles in $2$-edge-colored are analogues of similar but earlier results on digraphs. In particular, \cite{Gut84} was the first to characterize Hamiltonian bipartite tournaments using strong connectivity and cycle factors. \cite{Saa96} was the first to use alternating cycle factors and color-connectivity and he proved the same result as the authors but only for $2$-edge-colored complete multigraphs. On the other way, results on $2$-edge-colored multigraphs had been used to prove results on digraphs. For example, \cite{HagMan89} also characterized Hamiltonian bipartite tournaments, but they used a result on Hamiltonian cycles in $2$-edge-colored complete graphs by \cite{BanBan68}.

In~\cite{ConGalGol2017}, we gave a sufficient condition for a $2$-edge-colored multigraph to possess an alternating Hamiltonian cycle. Let $G$ be a $2$-edge-colored multigraph. An alternating $3$-path $P=(x_1$, $x_2$, $x_3$, $x_4)$ is \emph{closed-alternating} if there exist $y$, $w\in V(G)$ such that $C=(x_1$, $y$, $w$, $x_4$, $x_1)$ is an alternating cycle. Moreover, if every alternating $3$-path in $G$ is closed-alternanting, then we say that $G$ is \emph{closed-alternating}. We proved the following result.

\begin{theorem}[\cite{ConGalGol2017}]
\label{ConGalGol2017}
 Let $G$ be a connected $2$-edge-colored multigraph. If $G$ is closed-alternating and it has an alternating cycle factor, then $G$ has an alternating Hamiltonian cycle.
\end{theorem}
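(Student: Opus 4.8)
The plan is to run the classical minimum-cycle-factor argument. Let $\mathcal{F}=\{C_1,\dots,C_k\}$ be an alternating cycle factor of $G$ with $k$ as small as possible; if $k=1$ we are done, so assume $k\ge 2$ and aim for a contradiction by exhibiting a spanning family of vertex-disjoint alternating cycles with fewer than $k$ members. Since each $C_i$ is an alternating cycle it has even length, so every vertex of $G$ is incident with an edge of each colour; I will use this local fact repeatedly (it is what compensates, during the surgery below, for the absence of a colour-connectivity hypothesis). Because $G$ is connected there is an edge $e=uv$ with $u\in V(C_1)$ and $v\in V(C_2)$ for two distinct members of $\mathcal{F}$; write $a$ for the colour of $e$, write $b$ for the other colour, and let $u^{+},u^{-}$ be the neighbours of $u$ on $C_1$ along its edges of colour $a$ and $b$ respectively, and similarly $v^{+},v^{-}$ for $v$ on $C_2$. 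It is convenient — and I expect necessary — to choose $\mathcal{F}$ first and then the pair $(C_1,C_2)$ and the edge $e$ so as to minimise $|C_1|+|C_2|$ (or a similar parameter), so that any auxiliary cycle arising below that meets a third member of $\mathcal{F}$ can be handled by minimality.

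The heart of the argument is a splicing lemma: \emph{in a closed-alternating graph, two vertex-disjoint alternating cycles joined by an edge can be merged into one alternating cycle on the same vertex set.} The naive attempt builds the merged cycle from a Hamiltonian path of $C_1$ (delete the colour-$a$ edge $uu^{+}$), the edge $e$, a Hamiltonian path of $C_2$ (delete the colour-$a$ edge $vv^{+}$), and one further ``bridge'' edge joining $u^{+}$ to $v^{+}$; a short parity check shows this is an alternating cycle precisely when the bridge $u^{+}v^{+}$ exists and has colour $a$ (symmetrically with $u^{-},v^{-}$ and colour $b$ when $e$ has colour $b$). Such a bridge is not available a priori, and this is exactly where the hypothesis enters: reading off from $e$ and the cycle edges incident to $u$ and $v$ an alternating $3$-path $P$, the closed-alternating property forces an alternating $4$-cycle $D=(x_1,y,w,x_4,x_1)$ through the endpoints $x_1,x_4$ of $P$. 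One then argues that either a chord of $D$ can be used as the required bridge, or, when its colour is wrong, the vertices $y,w$ are absorbed: $C_1$, $C_2$, $D$, and the members of $\mathcal{F}$ meeting $\{y,w\}$ are spliced together into a single alternating cycle. In every case the resulting spanning family has strictly fewer than $k$ cycles, contradicting minimality.

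The main obstacle is precisely the reconciliation of $D$ with the bridge actually needed: the endpoints supplied by the natural $3$-path tend to be the ``other'' pair relative to the one the splice demands, and moreover $y,w$ may lie on $C_1$, on $C_2$, on other cycles of $\mathcal{F}$, or coincide with $u$ or $v$. For each such position one must check both the colour/parity bookkeeping of the reassembled walk and that the reassembly is genuinely one cycle rather than several. I therefore expect the bulk of the work to be setting up the extremal configuration — the minimal choice of $\mathcal{F}$ and of $(C_1,C_2,e)$, and, among all alternating $3$-paths attached to $e$, the one whose closing $4$-cycle interacts with $\mathcal{F}$ most favourably — so that only finitely many local cases survive, and then dispatching those cases, possibly with a second application of the closed-alternating property to remove a last colour mismatch. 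Once the splicing lemma is established, the theorem is immediate from the minimum-cycle-factor reduction of the first paragraph.
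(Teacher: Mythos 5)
There is no proof of this statement in the present paper to compare against --- Theorem~\ref{ConGalGol2017} is quoted from \cite{ConGalGol2017} --- so your proposal can only be judged on its own terms, and on those terms it has a genuine gap: the ``splicing lemma'' that carries the entire weight of the argument is asserted, not proved. Your outer skeleton (minimum alternating cycle factor, an edge $e=uv$ between two of its cycles, merge to contradict minimality) is the standard reduction and matches the methodology the paper uses elsewhere (Proposition~\ref{prop:goodpair}); your parity check is also correct that a bridge $[u^{+},v^{+}]$ of the same colour as $e$ yields a good pair and merges $C_1$ and $C_2$. But the closed-alternating hypothesis does not hand you that bridge. Applied to the only alternating $3$-path through $e$, namely $(u^{-},u,v,v^{-})$, it yields an alternating $4$-cycle $(u^{-},y,w,v^{-},u^{-})$: this gives an edge $[u^{-},v^{-}]$ (of undetermined colour) plus two vertices $y,w$ of unknown location, and a short computation shows that an edge between $u^{-}$ and $v^{-}$, of either colour, does \emph{not} by itself produce an alternating cycle through $e$ covering $V(C_1)\cup V(C_2)$ --- the only one-bridge merge compatible with $e$ is the $[u^{+},v^{+}]$ one. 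So the entire content of the theorem sits in exactly the step you defer (``one then argues that either a chord of $D$ can be used \dots or the vertices $y,w$ are absorbed''), and none of the resulting cases --- $y,w$ on $C_1$, on $C_2$, on other factor cycles, or equal to $u$ or $v$, each with its own colour bookkeeping and connectivity check --- is actually dispatched.

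There is also an internal inconsistency you should resolve before attempting the cases. The splicing lemma as you italicize it claims the merged cycle has vertex set $V(C_1)\cup V(C_2)$, but two sentences later you allow $y$ and $w$ to lie on other members of $\mathcal{F}$ and to be ``absorbed,'' in which case the merged cycle necessarily contains more vertices. As stated, the lemma is almost certainly false (and is stronger than what the reduction needs); what you actually need is the weaker claim that the number of cycles in the spanning family strictly decreases, and even that requires verifying that the reassembled walk is a single alternating cycle rather than several, which is precisely the unwritten part. Until that case analysis is supplied, this is a plausible plan rather than a proof.
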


\section{Notation and terminology}
For terminology and notation not defined here, we refer the reader to Chapter~16 of \cite{BanGut2009}. In this paper, $G = (V (G)$, $E(G))$ denotes a loopless multigraph. A \emph{$c$-edge-coloring} of $G$ is a function $c\colon E(G) \to \{1$, \ldots, $c\}$ and given an edge $[u,v]$ in $G$, its color is denoted by $c[u$, $v]$. We sometimes refer to $c$-edge-coloring as an \emph{edge-coloring}. For $2$-edge-colored multigraphs, we use colors blue and red instead of $1$ and $2$. In our figures, the solid edges are blue, the dashed edges are red and the  dash-and-dotted edges are edges with unknown color.  Throughout this paper we only consider $2$-edge-colored multigraphs.

A path (cycle) in a $2$-edge-colored multigraph is \emph{alternating} if no two consecutive edges have the same color. A path (cycle) is \emph{Hamiltonian} if it visits every vertex in the multigraph. A \emph{cycle factor} is a collection of mutually vertex-disjoint cycles that cover all vertices.

Let $G$ be a $2$-edge-colored multigraph, $G$ is \emph{color-connected} if for every pair of distinct vertices $x$, $y$ in $G$, there exist alternating $(x$, $y$)-paths $P$ and $Q$ such that their first edges have different color and their last edges have different color.

\begin{definition}
Let $G$ be a $2$-edge-colored multigraph. We say that $G$ is \emph{$2$-$\mathcal{M}$-closed} (resp. \emph{$2$-$\mathcal{NM}$-closed}) if for every monochromatic (resp. non-monochromatic) $2$-path $P=(x_1$, $x_2$, $x_3)$, there exists an edge between $x_1$ and $x_3$.
\end{definition}

\section{Main result}
In this work we provide the following characterization:
\begin{theorem}
\label{teo:M-closed-characterization}
Let $G$ be a $2$-$\mathcal{M}$-closed multigraph. $G$ has an alternating Hamiltonian cycle if and only if $G$  is color-connected and has an alternating cycle factor.
\end{theorem}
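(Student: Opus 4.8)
We first dispose of the easy direction. If $G$ has an alternating Hamiltonian cycle $H$, then $H$ is by itself an alternating cycle factor of $G$; moreover, for any two distinct vertices $x,y$, the two $(x,y)$-paths contained in $H$ are alternating, and since $H$ is alternating of even length their first edges receive different colors and their last edges receive different colors. Hence $G$ is color-connected, and the whole content of the theorem lies in the converse.

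For the sufficiency, assume $G$ is $2$-$\mathcal{M}$-closed, color-connected, and has an alternating cycle factor, and suppose for contradiction that $G$ has no alternating Hamiltonian cycle. Among all alternating cycle factors of $G$, choose one, $\mathcal{F}=\{C_1,\dots,C_k\}$, with $k$ minimum; by our assumption $k\ge 2$. Since $G$ is color-connected it is connected, so the multigraph obtained from $G$ by contracting each $C_i$ is connected, and therefore some two cycles of $\mathcal{F}$, say $C_1$ and $C_2$, are joined by an edge of $G$. Everything then reduces to the following \emph{merging step}: under our hypotheses, $G$ contains an alternating cycle whose vertex set is exactly $V(C_1)\cup V(C_2)$. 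Granting this, replacing $C_1,C_2$ in $\mathcal{F}$ by such a cycle produces an alternating cycle factor with $k-1$ cycles, contradicting minimality; equivalently, one runs the argument as an induction on $k$, the base case $k=1$ being trivial.

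It remains to carry out the merging step, which is the heart of the proof. Both $C_1$ and $C_2$ are alternating, hence of even length, so at each of their vertices each cycle uses exactly one blue and one red edge. The key observation is purely local: an alternating cycle on $V(C_1)\cup V(C_2)$ exists as soon as one finds a color $\gamma\in\{\text{blue},\text{red}\}$, a $\gamma$-colored edge $[x_1,y_1]$ of $C_1$, a $\gamma$-colored edge $[x_2,y_2]$ of $C_2$, and $\gamma$-colored edges $[y_1,x_2]$ and $[y_2,x_1]$ of $G$ — for then, writing $P_i$ for the path obtained from $C_i$ by deleting $[x_i,y_i]$ (which has odd length, hence both its end-edges have the color opposite to $\gamma$), the concatenation $P_1\,[y_1,x_2]\,P_2\,[y_2,x_1]$ is an alternating cycle covering $V(C_1)\cup V(C_2)$. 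To produce such a configuration one starts from an edge $[p,q]$ between $C_1$ and $C_2$, say of color $\gamma$, and applies $2$-$\mathcal{M}$-closedness to the monochromatic $2$-paths $(p_\gamma,p,q)$ and $(p,q,q_\gamma)$, where $p_\gamma$ and $q_\gamma$ are the endpoints of the $\gamma$-colored edges of $C_1$ and $C_2$ at $p$ and $q$; this yields the crossing edges $[p_\gamma,q]$ and $[p,q_\gamma]$, and one then walks around $C_1$ and $C_2$, at each step either already having a valid $\gamma$-configuration or applying $2$-$\mathcal{M}$-closedness to the next monochromatic $2$-path to obtain a further crossing edge. Color-connectedness is used to ensure the presence of crossing edges of the colors needed so that this walk cannot stall before a valid $\gamma$-configuration (for $\gamma$ blue or $\gamma$ red) is reached. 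The main obstacle is exactly this bookkeeping: $2$-$\mathcal{M}$-closedness only guarantees the \emph{existence} of the chords it creates, not their colors, so one must check that however these colors turn out, one of the two $\gamma$-configurations above is eventually forced — together with the routine but delicate point that the alternating paths supplied by color-connectedness between $C_1$ and $C_2$ may pass through other cycles of $\mathcal{F}$, which must be accommodated when selecting the pair of cycles to merge (for instance, by taking among all alternating paths joining two distinct cycles of $\mathcal{F}$ one of minimum length).
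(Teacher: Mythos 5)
Your easy direction is fine, and your ``key observation'' (a monochromatic $4$-cycle using one edge from each of $C_1$, $C_2$ lets you splice them into one alternating cycle) is exactly the paper's notion of a good pair of edges. But the reduction you build on it --- ``everything then reduces to the merging step: two adjacent cycles of a minimum factor can always be merged into one alternating cycle on their union'' --- is false, and this is the genuine gap. The paper's analysis shows that for two adjacent cycles in a $2$-$\mathcal{M}$-closed graph there is a second, non-degenerate outcome: no good pair of edges exists, all edges between the two cycles are present, and one cycle \emph{color-dominates} the other (all edges from the odd-indexed vertices of $C_1$ to $C_2$ blue, all edges from the even-indexed vertices red, etc.). In that configuration every candidate $4$-cycle between the two cycles is forced to be bichromatic, so no $\gamma$-configuration is ``eventually forced,'' and there need be no alternating cycle on $V(C_1)\cup V(C_2)$ at all. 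Color-connectivity does not rescue the local merge, because the alternating paths it guarantees may leave $V(C_1)\cup V(C_2)$ through a third cycle of the factor; your parenthetical fix (choose a shortest alternating path between two cycles of $\mathcal{F}$) does not address this.

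Because of this, the contradiction with color-connectivity can only be extracted \emph{globally}, and that is where the paper's real work lies: it defines a digraph on the cycles of a minimum factor with an arc $C_i\to C_j$ when $C_i$ color-dominates $C_j$, proves this digraph is a quasi-transitive local tournament with no directed cycles (any directed $2$- or $3$-cycle would let three of the factor's cycles be merged, contradicting minimality), concludes it is an acyclic tournament, and then observes that its source $C_1$ color-dominates every other cycle --- which finally violates color-connectivity, since no alternating path starting with the ``wrong'' color can escape $C_1$. Your proposal contains none of this structure, and the step you defer as ``bookkeeping'' (checking that the colors of the chords produced by $2$-$\mathcal{M}$-closedness always force a mergeable configuration) is precisely the point at which the claim fails. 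To repair the argument you would need to (i) carry out the full local case analysis to show that two adjacent cycles either merge or stand in a color-domination relation, and (ii) add the global acyclic-tournament argument that turns universal domination by one cycle into a failure of color-connectivity.
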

We also provide an infinite family of $2$-$\mathcal{NM}$-closed graphs, color-connected, with an alternating cycle factor, and with no alternating Hamiltonian cycle at all.

\begin{definition}
Let $G$ be a $2$-edge-colored multigraph. Given two alternating cycles $C_1=(x_1$, \ldots, $x_n$, $x_1)$, $C_2=(y_1$, \ldots, $y_m$, $y_1)$ and an edge $[x_i$, $y_j]$ between the cycles, we say that $C_1$ and $C_2$ are \emph{appropriately labelled with respect to $[x_i$, $y_j]$} whenever $c[x_i$, $x_{i+1}]=c[y_j$, $y_{j+1}]=c[x_i$, $y_j]$ (see Figure~\ref{a2}).
\end{definition}
\begin{figure}
  \centerline{\includegraphics[height=3cm]{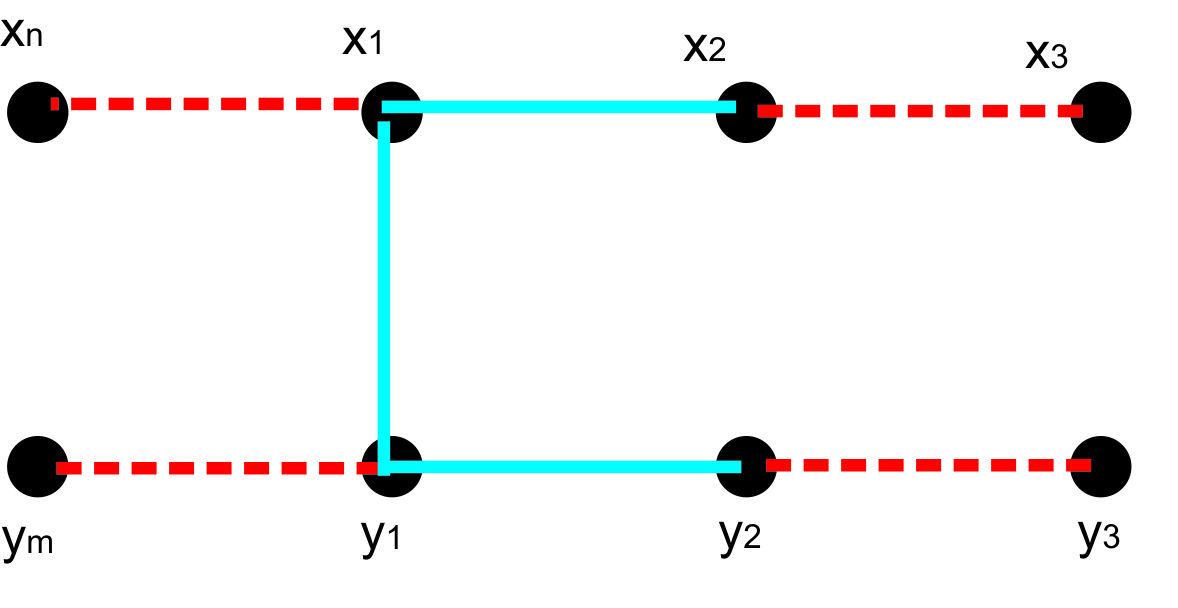}}
  \caption{Appropriately labelled cycles, the solid edges are blue and the dashed edges are red}
\label{a2}
\end{figure}

In~\cite{ConGalGol2017}, we focused on certain pair of edges which allow us to merge two alternating cycles into one alternating cycle. 
\begin{proposition}
\label{prop:goodpair}
Let $C_1=(x_1$, \ldots, $x_n$, $x_1)$ and $C_2=(y_1$, \ldots, $y_m$, $y_1)$ be two alternating cycles in a $2$-edge-colored multigraph $G$. If there exist two edges $[x_i$, $x_{i+1}]$, $[y_j$, $y_{j+1}]$ such that $(x_i$, $y_j$, $y_{j+1}$, $x_{i+1}$, $x_i)$ or $(x_i$, $y_{j+1}$, $y_{j}$, $x_{i+1}$, $x_i)$ is a monochromatic $4$-cycle, then there exists an alternating cycle with vertex set $V(C_{1})\cup V(C_{2})$. We call $[x_i$, $x_{i+1}]$, $[y_j$, $y_{j+1}]$ a \emph{good pair of edges}.
\end{proposition}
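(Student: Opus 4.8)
The plan is to build the required alternating cycle explicitly, by rerouting $C_1$ and $C_2$ at the good pair of edges. First I would use two symmetries to reduce to a single case. Reversing the orientation of $C_2$ (and relabelling its vertices accordingly) turns a monochromatic $4$-cycle of the form $(x_i, y_{j+1}, y_j, x_{i+1}, x_i)$ into one of the form $(x_i, y_j, y_{j+1}, x_{i+1}, x_i)$, so we may assume the latter; and by interchanging the names blue and red we may assume that all four of its edges $[x_i, x_{i+1}]$, $[y_j, y_{j+1}]$, $[x_i, y_j]$, $[y_{j+1}, x_{i+1}]$ are blue. Since $C_1$ is alternating and $[x_i,x_{i+1}]$ is blue, the edges $[x_{i-1},x_i]$ and $[x_{i+1},x_{i+2}]$ are red; likewise, since $C_2$ is alternating and $[y_j,y_{j+1}]$ is blue, the edges $[y_{j-1},y_j]$ and $[y_{j+1},y_{j+2}]$ are red.

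Next I would exhibit the candidate cycle
\[
 C=(x_i,\ y_j,\ y_{j-1},\ \ldots,\ y_{j+1},\ x_{i+1},\ x_{i+2},\ \ldots,\ x_{i-1},\ x_i),
\]
obtained by starting at $x_i$, crossing to $y_j$ along $[x_i,y_j]$, traversing the path $C_2-[y_j,y_{j+1}]$ from $y_j$ to $y_{j+1}$, crossing to $x_{i+1}$ along $[y_{j+1},x_{i+1}]$, and finally traversing the path $C_1-[x_i,x_{i+1}]$ from $x_{i+1}$ back to $x_i$. Since $C_1$ and $C_2$ are vertex-disjoint, $C$ meets every vertex of $V(C_1)\cup V(C_2)$ exactly once, so it is a cycle on that vertex set, and it remains only to verify that consecutive edges have distinct colours. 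Inside the two traversed subpaths this is inherited from $C_1$ and $C_2$ being alternating; at the four junction vertices $x_i$, $y_j$, $y_{j+1}$, $x_{i+1}$ it follows from the colours recorded above, since at each junction a blue connecting edge meets a red edge of a subpath (e.g. at $x_i$ the edge $[x_{i-1},x_i]$ is red while $[x_i,y_j]$ is blue, and at $y_{j+1}$ the edge $[y_{j+2},y_{j+1}]$ is red while $[y_{j+1},x_{i+1}]$ is blue).

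Because the construction is completely explicit, there is no genuine obstacle; the step requiring the most care is the bookkeeping, together with the observation that in a multigraph ``$C_2-[y_j,y_{j+1}]$'' means deleting only the single specified copy of that edge (so that, for instance, when $m=2$ the path $C_2-[y_j,y_{j+1}]$ is the remaining, necessarily red, edge between $y_j$ and $y_{j+1}$, and symmetrically for $n=2$), and the verification of the two junctions that involve the connecting edges $[x_i,y_j]$ and $[y_{j+1},x_{i+1}]$. The remaining possibility $(x_i, y_{j+1}, y_j, x_{i+1}, x_i)$ is already handled by the symmetry reduction made at the outset.
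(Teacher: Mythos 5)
Your proposal is correct and follows essentially the same route as the paper: you construct exactly the same merged cycle $(x_i, y_j, y_{j-1}, \ldots, y_{j+1}, x_{i+1}, \ldots, x_{i-1}, x_i)$ and check alternation at the four junctions, with the only cosmetic difference that you dispose of the second type of monochromatic $4$-cycle by reversing $C_2$ while the paper writes out the analogous cycle explicitly. The extra care you take with the multigraph subtlety (deleting only the specified copy of $[y_j,y_{j+1}]$) is a welcome but inessential addition.
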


\begin{proof}
If $(x_i$, $y_j$, $y_{j+1}$, $x_{i+1}$, $x_i)$ (respectively $(x_i$, $y_{j+1}$, $y_{j}$, $x_{i+1}$, $x_i)$) is monochromatic, then the alternating cycle $(x_i$, $y_j$, $y_{j-1}$, \ldots, $y_{j+2}$, $y_{j+1}$, $x_{i+1}$, $x_{i+2}$, \ldots, $x_{i-1}$, $x_{i})$ (resp. $(x_i$, $y_{j+1}$, $y_{j+2}$, \ldots, $y_{j-1}$, $y_{j}$, $x_{i+1}$, $x_{i+2}$, \ldots, $x_{i-1}$, $x_{i})$) merges both cycles.
\end{proof}

We will use good pairs of edges extensively throughout this work.

Now, we describe the structure of the arcs between two alternating cycles. Let $C_1=(x_1$, \ldots, $x_n$, $x_1)$ and $C_2=(y_1$, \ldots, $y_m$, $y_1)$ be two alternating cycles in a $2$-$\mathcal{M}$-closed multigraph. If there exists an  edge $[x_i$, $y_j]$ such that $c[x_i$, $x_{i+1}]=c[y_j$, $y_{j+1}]=c[x_i$, $y_j]$ and there are no good pairs of edges between $C_1$ and $C_2$, then we prove that there exists $[x_{i+1}$, $y_{j+1}]$ in the following lemma. Since there are no good pairs of edges, $[x_{i+1}$, $y_{j+1}]$ and $[x_{i}$, $y_{j}]$ have different color. Repeating this procedure, we have all the edges of the form $[x_{i+k},y_{j+k}]$ for any $k\in \mathbb{Z}$, where the subscripts are taken modulo $n$ and $m$, respectively. We call these edges \emph{parallel} edges to the edge $[x_i$, $y_j]$.

\begin{lemma} 
\label{2}
Let $G$ be a $2$-$\mathcal{M}$-closed multigraph, and let $C_1=(x_1$, \ldots, $x_n$, $x_1)$ and $C_2=(y_1$, \ldots, $y_m$, $y_1)$ be two alternating cycles in $G$ with an edge $[x_i$, $y_j]$ such that $c[x_i$, $x_{i+1}]=c[y_j$, $y_{j+1}]=c[x_i$, $y_j]$ (this is, $C_1$ and $C_2$ are appropriately labelled with respect to $[x_i$, $y_j]$). Then $[x_{i+1}$, $y_{j+1}]$, $[x_i$, $y_{j+1}]$ and $[y_{j}$, $x_{i+1}]$ are edges of $G$ with $c[x_{i+1}$, $y_{j+1}] \neq c[x_{i}$, $y_{j}]$, or there exists an alternating cycle with vertex set $V(C_1) \cup V(C_2)$.
\end{lemma}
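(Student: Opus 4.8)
The plan is to normalise by assuming without loss of generality that $c[x_i,y_j]=\text{blue}$; since $C_1$ and $C_2$ are alternating, this forces $c[x_i,x_{i+1}]=c[y_j,y_{j+1}]=\text{blue}$ and makes the neighbouring cycle-edges $[x_{i-1},x_i]$, $[x_{i+1},x_{i+2}]$, $[y_{j-1},y_j]$, $[y_{j+1},y_{j+2}]$ all red. First I would apply the $2$-$\mathcal{M}$-closed hypothesis to the monochromatic blue $2$-paths $(x_{i+1},x_i,y_j)$ and $(x_i,y_j,y_{j+1})$ to obtain that $[x_{i+1},y_j]$ and $[x_i,y_{j+1}]$ are always edges of $G$; thus two of the three asserted chords are automatic, and what is left is to show that either $[x_{i+1},y_{j+1}]$ is an edge with $c[x_{i+1},y_{j+1}]\neq\text{blue}$, or $V(C_1)\cup V(C_2)$ carries an alternating cycle.

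Next I would split on the colours of $[x_{i+1},y_j]$ and $[x_i,y_{j+1}]$. If $c[x_{i+1},y_j]=\text{blue}$, then $(x_{i+1},y_j,y_{j+1})$ is monochromatic blue, so $[x_{i+1},y_{j+1}]$ is an edge; if it is blue, then $(x_i,y_j,y_{j+1},x_{i+1},x_i)$ is a monochromatic blue $4$-cycle, hence $[x_i,x_{i+1}],[y_j,y_{j+1}]$ is a good pair and Proposition~\ref{prop:goodpair} gives an alternating cycle on $V(C_1)\cup V(C_2)$; otherwise $c[x_{i+1},y_{j+1}]=\text{red}\neq\text{blue}$ and the three chords are present as claimed. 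The case $c[x_i,y_{j+1}]=\text{blue}$ is symmetric, using the monochromatic blue $2$-path $(x_{i+1},x_i,y_{j+1})$.

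The decisive case is $c[x_{i+1},y_j]=c[x_i,y_{j+1}]=\text{red}$. Now $(x_i,x_{i+1},y_j,y_{j+1},x_i)$ is an alternating $4$-cycle, but it need not exhaust $V(C_1)\cup V(C_2)$, and enlarging it by substituting the long arcs of $C_1$ and $C_2$ for the cycle-edges $[x_i,x_{i+1}]$, $[y_j,y_{j+1}]$ fails: those arcs touch $\{x_i,x_{i+1}\}$ and $\{y_j,y_{j+1}\}$ through red edges, clashing with the red chords. (A parity count shows that a merge of $C_1$ and $C_2$ along exactly two chords is alternating precisely when the two chords, together with the cycle-edges joining their endpoints, form a monochromatic $4$-cycle, i.e. a good pair.) So I would mine more structure: applying $2$-$\mathcal{M}$-closedness to the monochromatic red $2$-paths $(x_{i+2},x_{i+1},y_j)$, $(y_{j-1},y_j,x_{i+1})$, $(x_{i-1},x_i,y_{j+1})$, $(y_{j+2},y_{j+1},x_i)$ forces the further chords $e_3=[x_{i+2},y_j]$, $e_4=[x_{i+1},y_{j-1}]$, $e_5=[x_{i-1},y_{j+1}]$, $e_6=[x_i,y_{j+2}]$. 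One checks that if $e_3$ and $e_4$ are both red then $[x_{i+1},x_{i+2}],[y_{j-1},y_j]$ is a good pair, and symmetrically if $e_5$ and $e_6$ are both red then $[x_{i-1},x_i],[y_{j+1},y_{j+2}]$ is a good pair, so in either event Proposition~\ref{prop:goodpair} finishes. In the remaining sub-cases one of $e_3,e_4$ and one of $e_5,e_6$ is blue, and I would push the closure hypothesis one more step: a blue chord together with an adjacent blue cycle-edge generates another monochromatic $2$-path, hence another chord; one argues that this cannot go on forever because the subscripts are read modulo $n$ and $m$, so a propagating family of parallel chords must wrap around and meet a chord of the opposite colour, and at that meeting point either a monochromatic $4$-cycle (a good pair) appears or the arcs of $C_1$ and $C_2$ together with the chords can be spliced into an alternating cycle through every vertex of $V(C_1)\cup V(C_2)$.

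The main obstacle is exactly this last case: ensuring the case analysis terminates. Each undetermined colour forces a branch, and an unlucky branch keeps producing new chords; the work is to show that at every branch one can exhibit either two chords whose colours already match the cycle-edges joining their endpoints --- so that Proposition~\ref{prop:goodpair} applies --- or two chords positioned so that the long-arc splicing alternates correctly, and that these two alternatives cover all configurations. Making the bookkeeping of colours and of the cyclic indices close cleanly is where the bulk of the effort goes.
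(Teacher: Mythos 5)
Your opening moves coincide with the paper's: normalising the colour to blue, extracting $[x_{i+1},y_j]$ and $[x_i,y_{j+1}]$ from the monochromatic $2$-paths $(x_{i+1},x_i,y_j)$ and $(x_i,y_j,y_{j+1})$, and disposing of the case where one of them is blue exactly as the paper does. Your first reductions in the all-red case are also sound: the chords $e_3,\dots,e_6$ do exist, and each of the pairs $\{e_3,e_4\}$ and $\{e_5,e_6\}$ being red does yield a good pair. But from that point on you have not given a proof. The entire content of the lemma lives in the remaining configuration, and there you offer only the assertion that "a propagating family of parallel chords must wrap around and meet a chord of the opposite colour, and at that meeting point either a good pair appears or the arcs can be spliced into an alternating cycle" --- which you yourself flag as the unresolved obstacle. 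Nothing in the sketch shows that the branching terminates, that every terminal configuration is one of the two favourable kinds, or even what the spliced cycle would look like. (Note also that a propagation of \emph{parallel} chords with alternating colours is exactly what Corollary~\ref{3} later derives \emph{from} this lemma; to prove the lemma itself you need the one-step statement, so leaning on global propagation risks circularity.)

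The concrete idea you are missing is that the paper never lets the analysis run around the cycles: it stays local and, crucially, uses $2$-$\mathcal{M}$-closedness on paths that leave a cycle and come back to force chords \emph{inside} $C_1$ and $C_2$. For instance, in its Case~1 it shows $[x_n,y_2],[x_n,y_3],[y_m,x_2],[y_m,x_3]$ must all be blue, then derives $[x_n,y_1]$, and from a short colour analysis obtains the intra-cycle chords $[x_n,x_2]$ and $[y_m,y_2]$ (both red), which are what make the explicit merged cycle $(y_m, y_2, y_1, x_3, \ldots, x_n, x_2, x_1, y_3, \ldots, y_m)$ alternate; its Case~2 similarly produces the chord $[x_2,x_4]$ inside $C_1$ and the cycle $(x_4, x_2, x_3, y_2, y_3, \ldots, y_1, x_1, x_n, \ldots, x_4)$. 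Your sketch only ever manufactures chords between the two cycles, so the "splicing of the arcs of $C_1$ and $C_2$ together with the chords" you appeal to cannot produce these cycles: as your own parity remark shows, a merge using only two inter-cycle chords alternates precisely when they form a good pair, and in the surviving configurations no good pair is available. Until the terminal cases are enumerated and explicit alternating cycles (necessarily using intra-cycle chords) are exhibited for each, the argument has a genuine gap.
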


\begin{proof}
In what follows, recall that if we find a good pair of edges, we are done since we can merge both cycles by Proposition~\ref{prop:goodpair}. So, we suppose that there are no good pairs of edges.

Assume without loss of generality that $C_1$ and $C_2$ are appropriately labelled with respect to $[x_1$, $y_1]$ and this is a blue edge. Notice that $(x_{2}$, $x_1$, $y_1)$ and $(x_1$, $y_1$, $y_{2})$ are monochromatic $2$-paths, hence $[x_{1}$, $y_{2}]$, $[x_{2}$, $y_{1}] \in E(G)$. If one of those edges is blue, then $(x_{2}$, $x_{1}$, $y_{2})$ or $(y_{2}$, $y_1$, $x_{2})$ is a monochromatic path, and therefore $[x_{2}$, $y_{2}] \in E(G)$.  If $[x_{2}$, $y_{2}]$ is blue, we are done; otherwise, it is a red edge and $[x_{1}$, $x_{2}]$, $[y_{1}$, $y_{2}]$ is a good pair of edges.  So assume that $[x_{1}$, $y_{2}]$ and $[x_{2}$, $y_{1}]$ are both red (see Figure \ref{figure4}).
\begin{figure}
\begin{minipage}{1\linewidth}
\begin{minipage}{0.45\linewidth}
 \includegraphics[height=3cm]{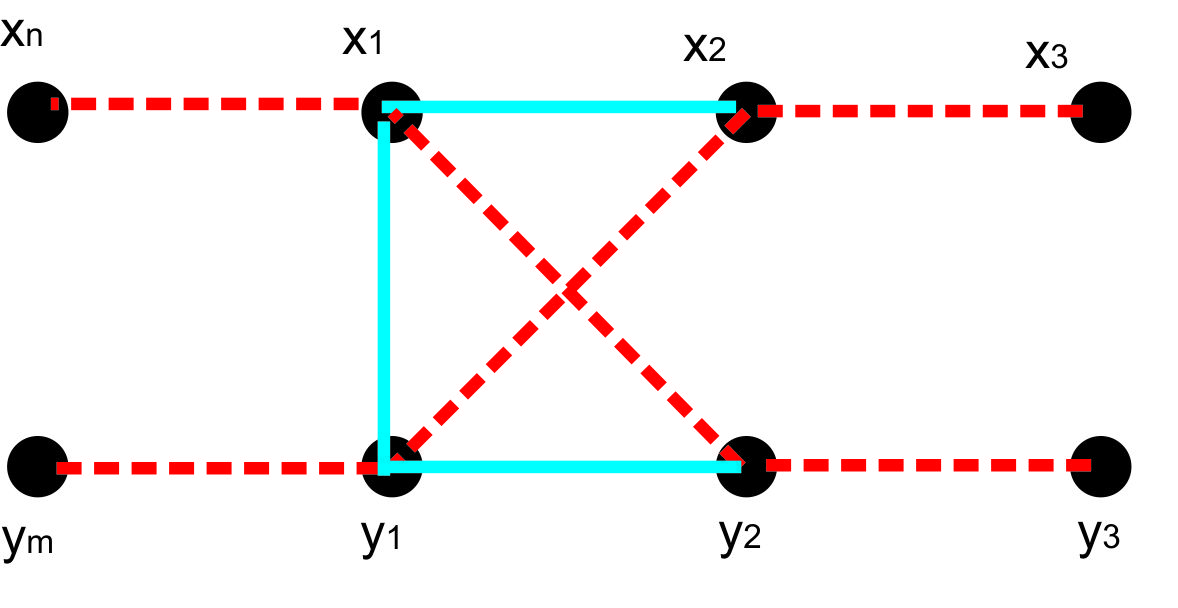}
 \caption{Initial edges in a $2$-$\mathcal{M}$-closed multigraph.}
 \label{figure4}
\end{minipage}
\hfill
\begin{minipage}{0.45\linewidth}
 \includegraphics[height=3cm]{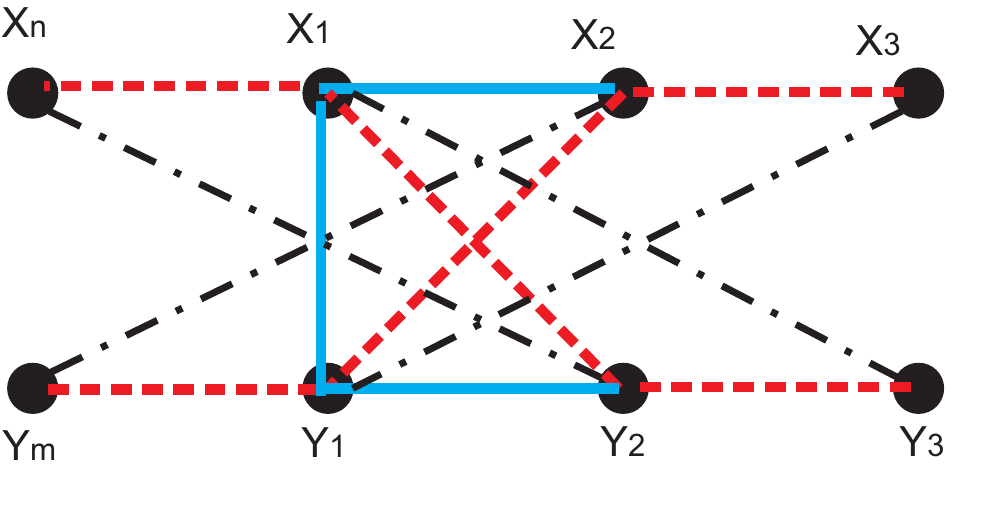}
 \caption{Structure of a $2$-$\mathcal{M}$-closed multigraph (the dash-and-dotted edges have unknown color.)}
 \label{figure4b}
\end{minipage}
\end{minipage}
\end{figure}

Since $(x_{3}$, $x_{2}$, $y_{1})$ and $(y_{3}$, $y_{2}$, $x_1)$ are monochromatic, we have that $[x_{1}$, $y_{3}]$, $[x_{3}$, $y_{1}] \in E(G)$. Similarly, $[x_n$, $y_2]$, $[y_m$, $x_2]\in E(G)$. See Figures~\ref{figure4b} and~\ref{figure5}. Hence, in each of the following cases, either these conditions are met, or else there is a good pair of edges and an alternating cycle comprising both sets of vertices. 

\vspace{\topsep}
\noindent\textit{Case 1.} $[x_{1}$, $y_{3}]$ and $[x_{3}$, $y_{1}]$ are red, thus $[x_n$, $y_3]$, $[y_m$, $x_3]\in E(G)$.
\begin{figure}
  \centerline{\includegraphics[height=3cm]{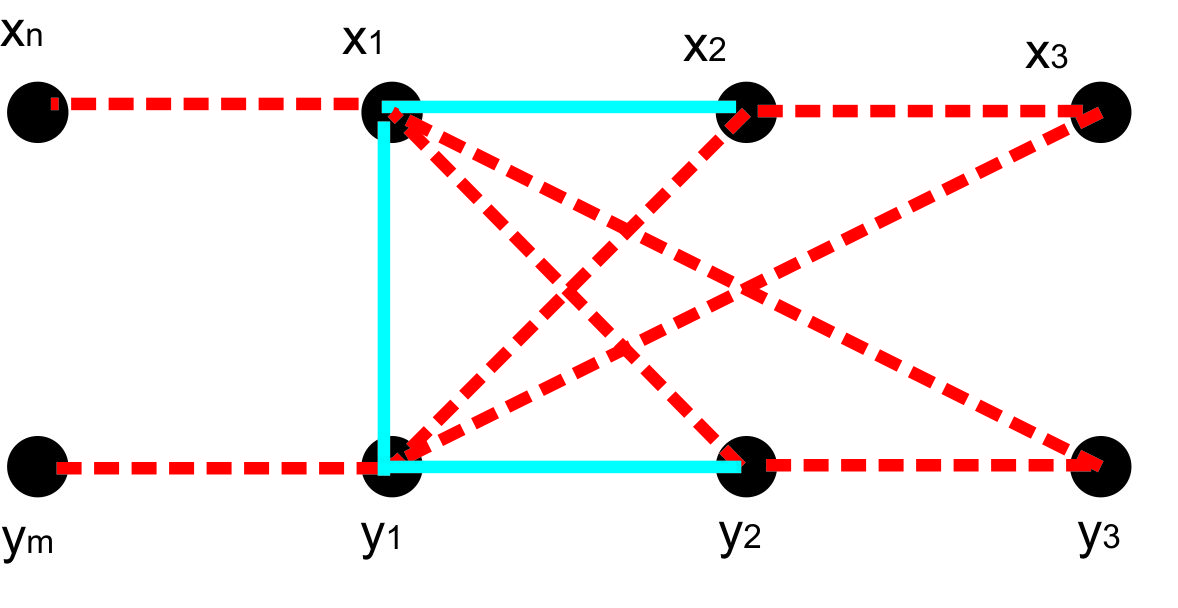}}
  \caption{Proof of Lemma~\ref{2}.}
\label{figure5}
\end{figure}

If one of the edges $[x_n$, $y_2]$, $[x_n$, $y_3]$, $[y_m$, $x_2]$, and $[y_m$, $x_3]$ is red, there exist a good pair of edges in $G$ (see Figures~\ref{figure6} and~\ref{figure7}).
\begin{figure}
\begin{minipage}{1\linewidth}
\begin{minipage}{0.45\linewidth}
 \includegraphics[height=3cm]{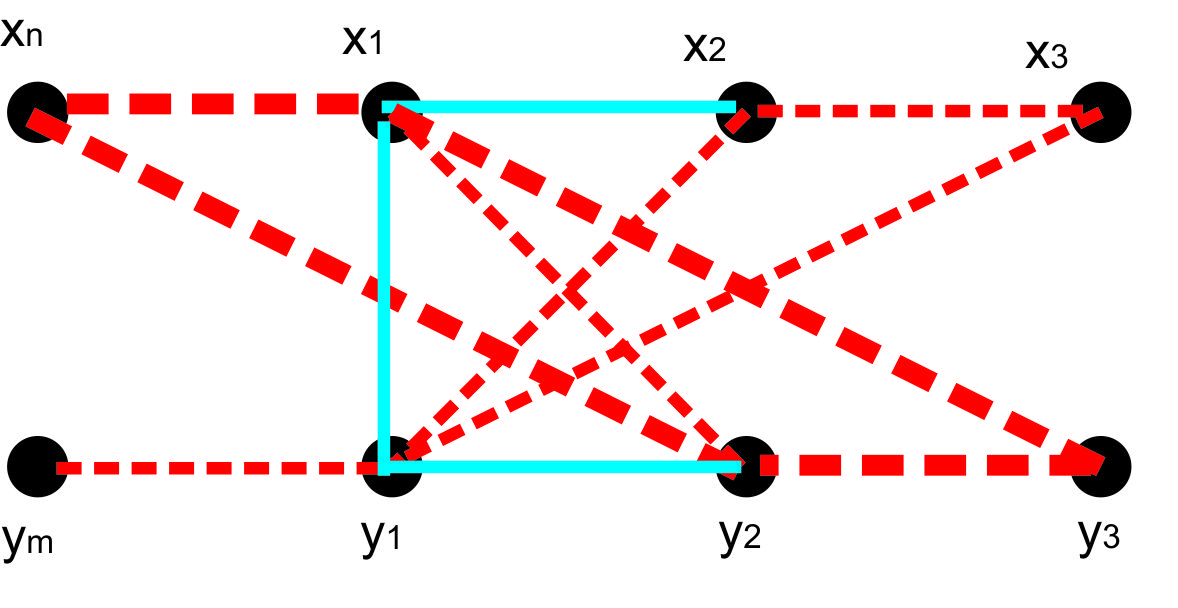}
 \caption{A good pair of edges in Case 1, when $[x_n,y_2]$ is red.}
 \label{figure6}
\end{minipage}
\hfill
\begin{minipage}{0.45\linewidth}
 \includegraphics[height=3cm]{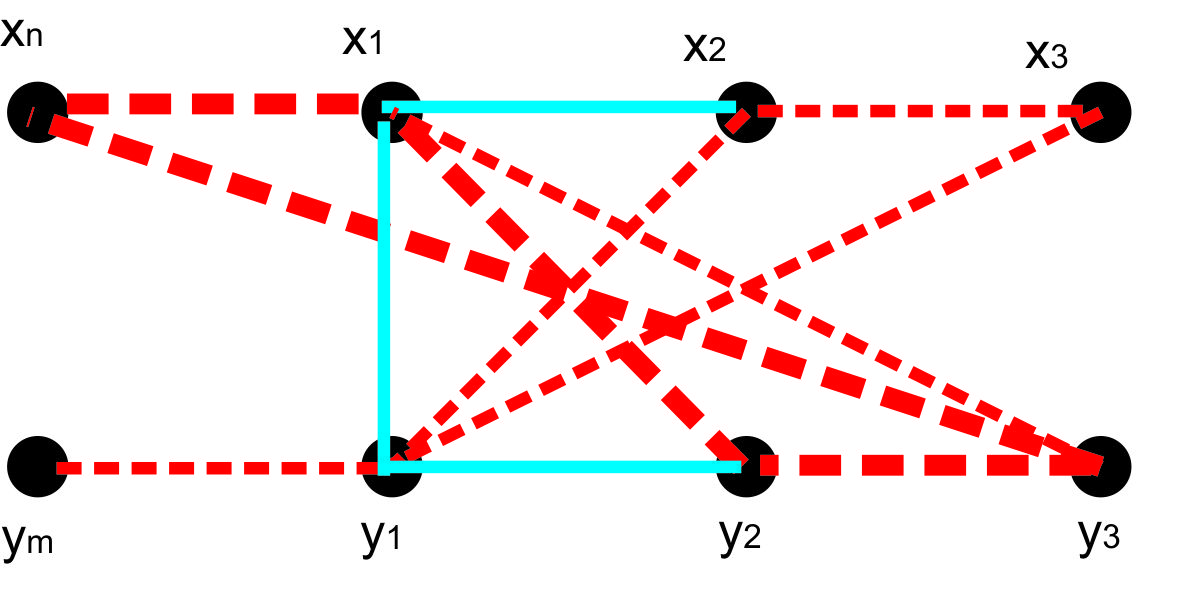}
 \caption{Other good pair of edges in Case 1 when $[x_n,y_3]$ is red.}
 \label{figure7}
\end{minipage}
\end{minipage}
\end{figure}

So assume that $[x_n$, $y_2]$, $[x_n$, $y_3]$, $[y_m$, $x_2]$ and $[y_m$, $x_3]$ are blue (see Figure \ref{figure8}).
 \begin{figure}
 \centering
 \includegraphics[height=2.9cm]{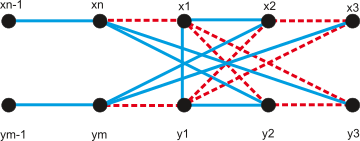}
 \caption{$[x_n$, $y_2]$, $[x_n$, $y_3]$, $[y_m$, $x_2]$ and $[y_m$, $x_3]$ are blue edges.}
 \label{figure8}
 \end{figure}
  \begin{figure}
 \centering
 \includegraphics[height=3cm]{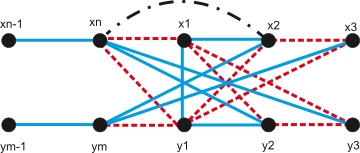}
 \caption{$[x_n$, $y_1]$ is a red edge.}
 \label{figure9}
\end{figure}
Therefore  $(x_n$, $y_2$, $y_1)$ is monochromatic, and hence $[x_n$, $y_1] \in E(G)$. We will show that no matter what is the color of the edge $[x_n$, $y_1]$, one of the edges $[x_2$, $y_2]$ and $[x_n$, $x_2]$ is in $G$. First assume that $[x_n$, $y_1]$ is red. Since $(x_n$, $y_1$, $x_2)$ is monochromatic, we  have that $[x_n$, $x_2]$ is in $G$ (see Figure \ref{figure9}).

Hence, assume that $[x_n$, $y_1]$ is blue. Therefore $[x_{n-1}$, $y_2]$ and $[x_{n-1}$, $y_1]$ are in $G$, since $(x_n$, $y_1$, $y_2)$ and $(x_{n-1}$, $x_n$, $y_1)$ are monochromatic and these edges are red (otherwise there exist a good pair of edges). Therefore  $(x_2$, $y_1$, $x_{n-1})$ is a monochromatic path and $[x_{n-1}$, $x_2]$ is in $G$. If $[x_{n-1}$, $x_2]$ is red, then $(y_2$, $x_{n-1}$, $x_2)$ is a monochromatic path, so $[x_2$, $y_2]\in E(G)$. If $[x_2$, $y_2]$ is a blue edge, then $[x_1$, $y_1]$ and $[x_2$, $y_2]$ is a good pair of edges and we can merge both cycles. Otherwise $[x_{n-1}$, $x_2]$ is blue, therefore $(x_n$, $x_{n-1}$, $x_2)$ is monochromatic and $[x_n$, $x_2]$ is in $G$. See Figure \ref{figure10}.
\begin{figure}
\begin{minipage}{1\linewidth}
\begin{minipage}{0.47\linewidth}
 \includegraphics[height=3cm]{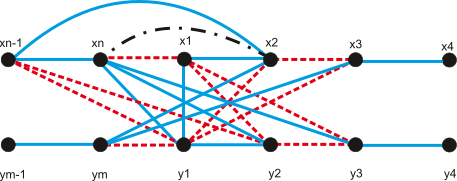}
 \caption{$[x_n$, $y_1]$ is a blue edge.}
 \label{figure10}
\end{minipage}
\hfill
\begin{minipage}{0.47\linewidth}
 \includegraphics[height=2.7cm]{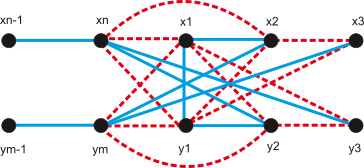}
 \caption{Alternating cycle in Case 1.}
 \label{figure11}
\end{minipage}
\end{minipage}
\end{figure}
Analogously, no matters the color of the edge $[y_m$, $x_1]$, we have $[y_m$, $y_2]\in E(G)$.

Thus, the edges $[y_m$, $y_2]$, $[x_n$, $x_2]$ are in $G$ and both are red (otherwise $(y_2$, $x_n$, $x_2)$ or $(x_2$, $y_m$, $y_2)$ are monochromatic, so $[x_2$, $y_2]$ will be in $G$). Therefore $(y_m$, $y_2$, $y_1$, $x_3$, $x_4$, \ldots, $x_n$, $x_2$, $x_1$, $y_3$, \ldots, $y_m)$ is an alternating cycle with vertex set $V(C_1) \cup V(C_2)$. See Figure \ref{figure11}.

\vspace{\itemsep}
\noindent\textit{Case 2.}  $[x_{1}$, $y_{3}]$ and $[x_{3}$, $y_{1}]$ are not both red. Suppose without loss of generality that $[x_{3}$, $y_{1}]$ is blue. Therefore $(y_2$, $y_1$, $x_3)$ is monochromatic, and $[x_3$, $y_2]\in E(G)$. See Figure \ref{fig12}.
\begin{figure}
  \centerline{\includegraphics[height=3cm]{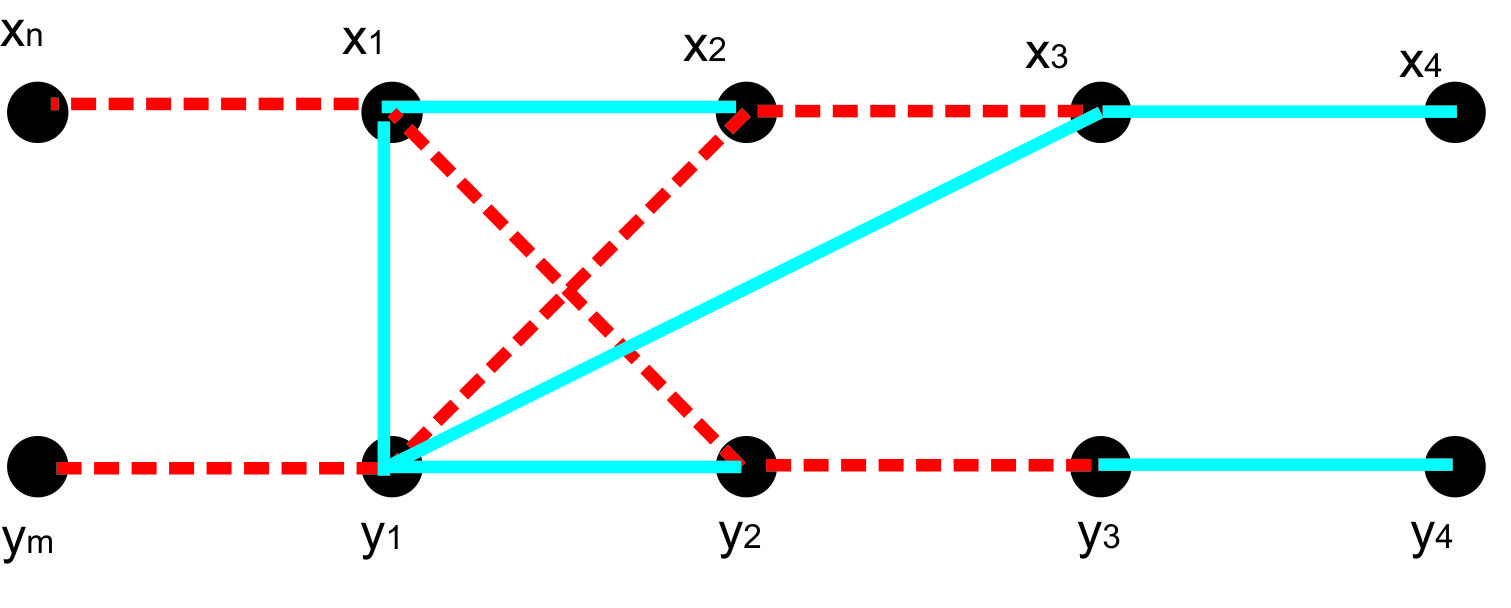}}
  \caption{Case 2.}
\label{fig12}
\end{figure}
If $[x_3$, $y_2]$ is red, then $(y_2$, $x_3$, $x_2)$ is monochromatic, and $[x_2$, $y_2]\in E(G)$. Otherwise, $[x_3$, $y_2]$ is blue (see Figure \ref{fig13}).
\begin{figure}
\begin{minipage}{1\linewidth}
\begin{minipage}{0.47\linewidth}
 \includegraphics[height=2.7cm]{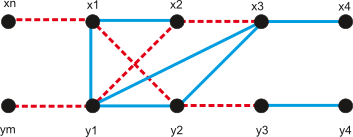}
  \caption{$[x_3$, $y_2]$ is a blue edge.}
\label{fig13}
\end{minipage}
\hfill
\begin{minipage}{0.47\linewidth}
  \includegraphics[height=3cm]{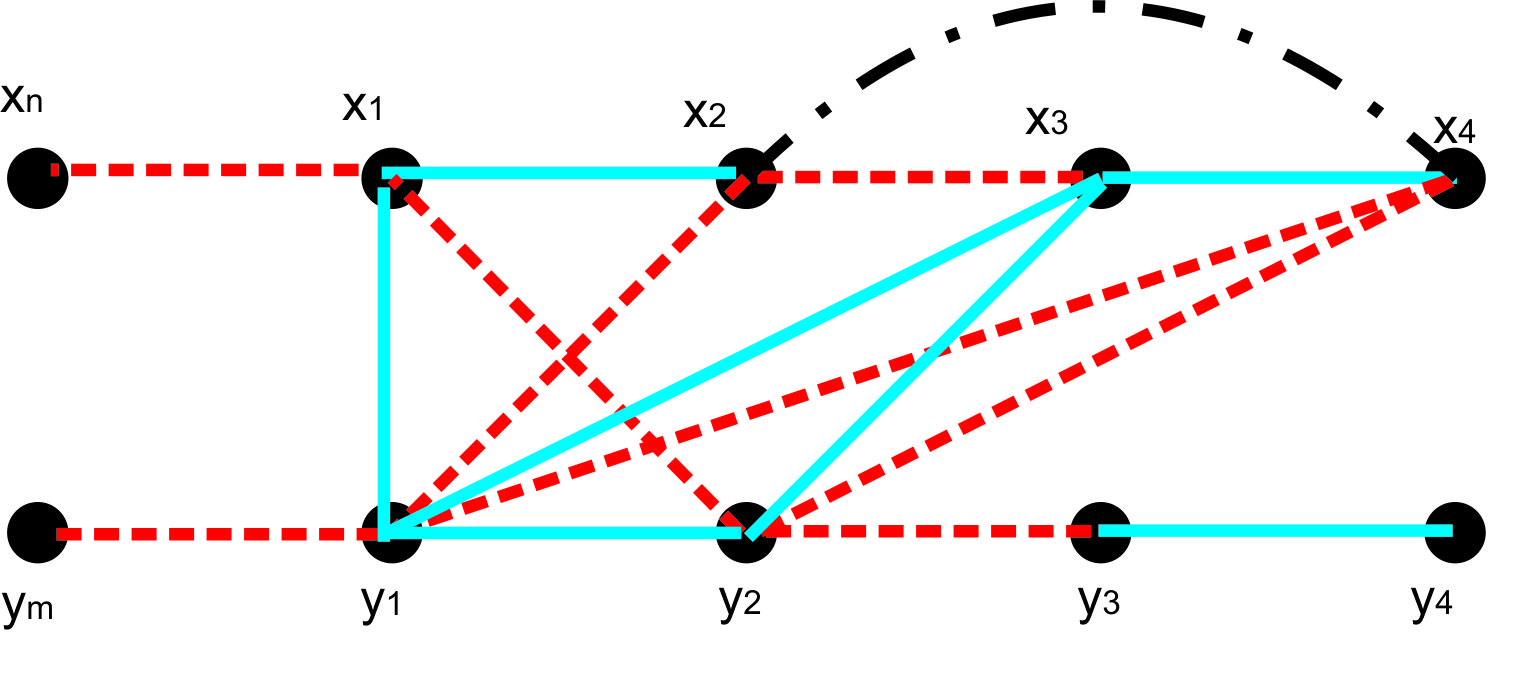}
  \caption{Edge $[x_2$, $x_4]$.}
\label{fig14}
\end{minipage}
\end{minipage}
\end{figure}

Therefore, $[y_1$, $x_4]$ is in $G$. If it is blue, then there exist a good pair of edges $(y_1$, $x_4$, $x_3$, $y_2$, $y_1)$. Hence, $[y_1$, $x_4]$ is red. The path $(x_2$, $y_1$, $x_4)$ is monochromatic, thus $[x_2$, $x_4] \in E(G)$. Then $(x_4$, $x_3$, $y_2)$ is also monochromatic, so $[x_4$, $y_2]\in E(G)$. If $[x_4$, $y_2]$ is blue, we have a good pair of edges. Hence, it is red (see Figure \ref{fig14}).

If $[x_2$, $x_4]$ is red, then $(x_2$, $x_4$, $y_2)$ is a monochromatic path, $[x_2$, $y_2]\in E(G)$ and we are done. Therefore, $[x_2$, $x_4]$ is blue and $(x_4$, $x_2$, $x_3$, $y_2$, $y_3$, \ldots, $y_1$, $x_1$, $x_n$, \ldots, $x_4)$ is an alternating cycle with vertex set $V(C_1) \cup V(C_2)$.

In all these cases, we have that the edge $[x_2$, $y_2]$ is in $G$ or there exists an alternating cycle with vertex set $V(C_1) \cup V(C_2)$.
\end{proof}

\begin{corollary} \label{3}
Let $C_1=(x_1$, \ldots, $x_n$, $x_1)$ and $C_2=(y_1$, \ldots, $y_m$, $y_1)$ be two alternating cycles in a $2$-$\mathcal{M}$-closed graph. If there exists an edge between $C_1$ and $C_2$, then at least one of the following holds:
\begin{enumerate}
 \item There exists an alternating cycle with vertex set $V(C_1) \cup V(C_2)$.
 \item $[x$, $y] \in E(G)$ for every $x \in C_1$ and $y \in C_2$.
\end{enumerate}
\end{corollary}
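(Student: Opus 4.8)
The plan is to bootstrap from the single edge guaranteed by the hypothesis to \emph{all} edges between the two cycles, by invoking Lemma~\ref{2} repeatedly. The device that makes this work is the parity of alternating cycles: an alternating cycle has even length, so every vertex of $C_1$ is incident with exactly one blue and one red edge of $C_1$, and the same holds in $C_2$. Hence, given \emph{any} edge $[x,y]$ with $x\in V(C_1)$ and $y\in V(C_2)$, I can choose the cyclic orientations of $C_1$ and $C_2$ so that the $C_1$-edge leaving $x$ and the $C_2$-edge leaving $y$ both have colour $c[x,y]$; that is, after relabelling, $C_1$ and $C_2$ become appropriately labelled with respect to $[x,y]$. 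Since the orientations are only bookkeeping, this choice may be remade afresh for every edge encountered, and all conclusions (``such-and-such a pair is an edge'') are orientation-free.

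First I would take an edge $[x_i,y_j]$ between $C_1$ and $C_2$ and relabel so that it is appropriately labelled, say with $c[x_i,y_j]=c[x_i,x_{i+1}]=c[y_j,y_{j+1}]$ blue. If some invocation of Lemma~\ref{2} ever yields an alternating cycle on $V(C_1)\cup V(C_2)$, conclusion~(1) holds and I stop; otherwise Lemma~\ref{2} guarantees the edges $[x_i,y_{j+1}]$, $[x_{i+1},y_j]$, $[x_{i+1},y_{j+1}]$, and inspection of its proof shows that in this surviving case the off-diagonal edges $[x_i,y_{j+1}]$ and $[x_{i+1},y_j]$ are red, the colour opposite to $[x_i,y_j]$. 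Now $[x_i,y_{j+1}]$ is red, and both $[x_{i-1},x_i]$ and $[y_{j+1},y_{j+2}]$ are red, so reversing the orientation of $C_1$ makes the cycles appropriately labelled with respect to $[x_i,y_{j+1}]$; feeding this edge into Lemma~\ref{2} produces $[x_i,y_{j+2}]$, once more of the opposite colour. Iterating — alternating between the two orientations of $C_1$, so that at each step the newly produced edge $[x_i,y_{j+k+1}]$ is always to the ``forward'' neighbour of $y_{j+k}$ rather than back to $y_{j+k-1}$ — an induction on $k$ yields $[x_i,y_{j+k}]\in E(G)$ for all $k\ge 0$ unless some intermediate step produces an alternating cycle on $V(C_1)\cup V(C_2)$. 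Reading subscripts of $C_2$ modulo $m$, this says $x_i$ is joined to every vertex of $C_2$; by the same argument with the roles of $C_1$ and $C_2$ exchanged, $y_j$ is joined to every vertex of $C_1$.

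Finally I would splice the two sweeps together: by the second sweep, every $x\in V(C_1)$ satisfies $[x,y_j]\in E(G)$, and relabelling appropriately for the edge $[x,y_j]$ and rerunning the first sweep shows that $x$ is joined to every vertex of $C_2$; hence $[x,y]\in E(G)$ for all $x\in V(C_1)$ and $y\in V(C_2)$, which is conclusion~(2). I expect the main obstacle to be organisational rather than conceptual: at each invocation of Lemma~\ref{2} one must correctly identify which orientations of $C_1$ and $C_2$ make the cycles appropriately labelled with respect to the current edge, and which of the three edges Lemma~\ref{2} then delivers is the one advancing the sweep to a not-yet-reached vertex. The one ingredient that genuinely uses the \emph{proof} of Lemma~\ref{2}, and not merely its statement, is that the relevant off-diagonal edge always receives the colour opposite to the current edge — without this the sweep could bounce back to a vertex already reached. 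Everything else is routine.
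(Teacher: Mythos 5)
Your proof is correct and follows the same overall strategy as the paper's: assume there is no good pair of edges (else Proposition~\ref{prop:goodpair} merges the cycles), and then bootstrap from the single given edge to all edges between the cycles by iterating Lemma~\ref{2}, relabelling orientations at each step so that the current edge is appropriately labelled. The difference lies in the mechanics of the iteration. The paper uses Lemma~\ref{2} strictly as stated: it only ever exploits the \emph{diagonal} edge $[x_{i+1},y_{j+1}]$, whose existence and colour are part of the lemma's conclusion, and consequently its sweep must travel along the family of parallel diagonal edges $[x_{1\pm k},y_{j\mp k}]$ for $nm$ steps before landing back on $[x_1,y_{j+1}]$ (its Case~1 handles the monochromatic configuration directly from $2$-$\mathcal{M}$-closedness). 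Your sweep instead fixes $x_i$ and advances along $C_2$ in $m$ steps using the \emph{off-diagonal} edge $[x_i,y_{j+1}]$, which is shorter; but, as you correctly flag, it relies on the off-diagonal edges receiving the colour opposite to $c[x_i,y_j]$ in the surviving case --- a fact that is established inside the proof of Lemma~\ref{2} (if either off-diagonal edge had the same colour, that proof obtains $[x_{i+1},y_{j+1}]$ and then either a merge or a good pair) but is not recorded in its statement. Your argument is therefore sound, though to be self-contained it should either restate Lemma~\ref{2} with this stronger conclusion or reprove that one step; the paper's $nm$-step detour is exactly the price it pays for using the lemma as a black box.
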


\begin{proof}
 We can assume that between $C_1$ and $C_2$ there is no good pair of edges (otherwise, we are done since we can merge both cycles by Proposition~\ref{prop:goodpair}). Notice that Lemma~\ref{2} states, in this situation, that if $(x_{i-1}, x_{i}, y_{j}, y_{j+1})$ is a monochromatic path between $C_{1}$ and $C_{2}$, then $[x_{i-1},y_{j+1}]$ exists and $c[x_{i-1},y_{j+1}]\neq c[x_{i}, y_{j}]$.
 
Suppose that $[x_1,y_j]$ is an edge in $G$, we will prove that $[x_1,y_{j+1}]$ is an edge in $G$. Since we can interchange $C_1$ and $C_2$ and repeat this procedure, we will obtain that $[x,y]\in E(G)$ for every $x \in C_1$ and $y \in C_2$. We consider the following cases:

\vspace{\topsep}
\noindent\textit{Case 1.} $c[x_1,y_j]=c[y_j,y_{j+1}]$. Then $(x_1,y_j,y_{j+1})$ is a monochromatic path. Therefore $[x_1,y_{j+1}]$ is an edge of $G$.

\vspace{\itemsep}
\noindent\textit{Case 2.} $c[x_1,y_j] \neq c[y_j,y_{j+1}]$. Suppose without loss of generality that $c[x_1,x_2]$ is blue.

\vspace{\itemsep}
\noindent\textit{Case 2.1} $c[x_1,y_j]$ is red and $c[y_j,y_{j+1}]$ is blue. Then $c[x_1,x_n]$ and $c[y_j,y_{j-1}]$ are red edges. Since $(x_{n}, x_{1}, y_{j}, y_{j-1})$ is a (monochromatic) red path, Lemma~\ref{2} implies that $[x_n,y_{j-1}]$ is a blue edge in $G$.  Now,  $(x_{n-1}, x_{n}, y_{j-1}, y_{j-2})$ is a (monochromatic) blue path, so we have that  $[x_{n-1},y_{j-2}]$ is a red edge in $G$. Continuing this way, we obtain that $[x_{1-i},y_{j-i}]$ is an edge in $G$ for every $i\in \mathbb{N}$, where the subscripts are taken modulo $n$ and $m$, respectively.  Applying Lemma~\ref{2} to edges $[x_{1-(i+1)}, x_{1-i}]$, $[x_{1-i},y_{j-i}]$ and $[y_{j-(i+1)},y_{j-i}]$, we have that $[x_{1-(i+1)},y_{j-i}] \in E(G)$. Therefore for $i=nm-1$, $[x_{1-((nm-1)+1)},y_{j-(nm-1)}]$ is in $G$, but $[x_{1-((nm-1)+1)},y_{j-(nm-1)}]=[x_{1},y_{j+1}]$ since the subscripts are taken modulo $n$ and $m$, respectively.

\vspace{\itemsep}
\noindent\textit{Case 2.2} $c[x_1,y_j]$ is blue and $c[y_j,y_{j+1}]$ is red. Then $c[y_j,y_{j-1}]$ is blue. Since $(x_{2}, x_{1}, y_{j}, y_{j-1})$ is a (monochromatic) blue path, Lemma~\ref{2} implies that $[x_2,y_{j-1}]$ is a red edge. Now, $(x_{3}, x_{2}, y_{j-1}, y_{j-2})$ is a (monochromatic) red path, so we have that $[y_{j-1},y_{j-2}]$, $[x_3,y_{j-2}]$ is a blue edge. Continuing this way, we obtain that $[x_{1+i},y_{j-i}]$ is an edge in $G$ for every $i\in \mathbb{N}$, where the subscripts are taken modulo $n$ and $m$, respectively. Applying Lemma~\ref{2} to edges $[x_{1+(i+1)}, x_{1+i}]$, $[x_{1+i},y_{j-i}]$ and $[y_{j-(i+1)},y_{j-i}]$, we have that $[x_{1+(i+1)},y_{j-i}] \in E(G)$. Therefore for $i=nm-1$, $[x_{1+(nm-1)+1},y_{j-(nm-1)}]$ is in $G$, but $[x_{1+(nm-1)+1},y_{j-(nm-1)}]=[x_{1},y_{j+1}]$ since the subscripts are taken modulo $n$ and $m$, respectively.
\end{proof}

Let $C$ be a cycle in $G$. We denote by {\bf $I_{C}$} (resp. {\bf $P_{C}$}) the set of vertices with odd (resp. even) subscript in $C$.
\begin{definition}
Let $C_1$ and $C_2$ be two disjoint alternating cycles in a $2$-$\mathcal{M}$-closed graph $G$. We will say that \emph{$C_1$ blue-dominates} (resp. \emph{red-dominates) $C_2$} whenever the following conditions are satisfied:
\begin{itemize}
\item $[x$, $y]\in E(G)$ for every $x \in C_1$ and $y\in C_2$.
\item $G[I_{C_1}]$ and $G[P_{C_1}]$ are complete graphs.
\item All the edges in $G[I_{C_1}]$ are blue (resp. red).
\item All the edges in $G[P_{C_1}]$ are red (resp. blue).
\item All the edges between $I_{C_1}$ and $C_2$ are blue (resp. red).
\item All the edges between $P_{C_1}$ and $C_2$ are red (resp. blue).
\end{itemize}
\begin{figure}
  \centerline{\includegraphics[height=4.5cm]{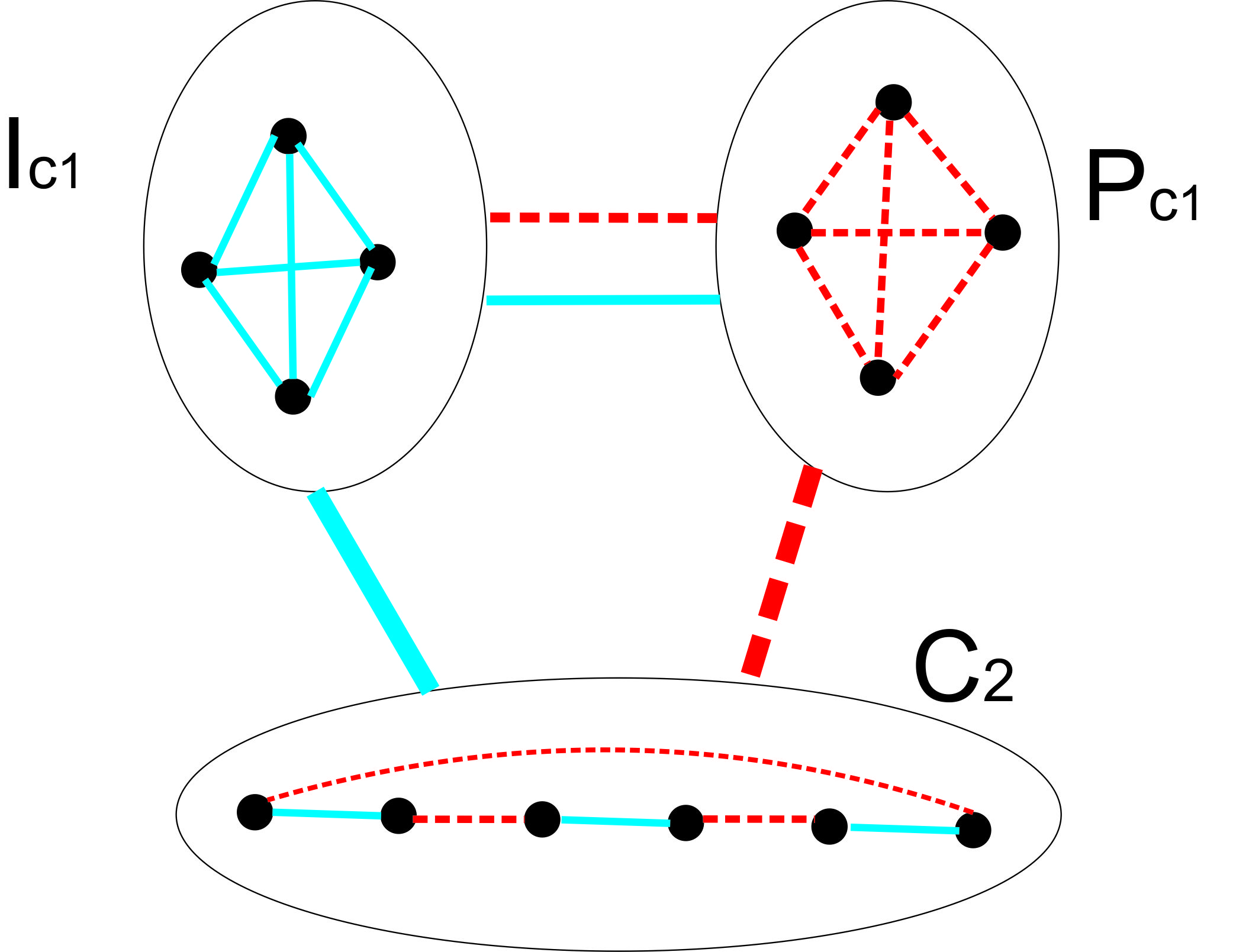}}
  \caption{$C_1$ blue-dominates $C_2$.}
\label{fig23}
\end{figure}
See Figure~\ref{fig23}. Whenever $C_1$ blue-dominates or red-dominates $C_2$, we say that $C_1$ \emph{color-dominates} $C_2$.
\end{definition}

We will make use of some concepts about digraphs. The vertex set and the arc set of a digraph $D$ are denoted by $V(D)$ and $A(D)$, respectively. A digraph for which there exists exactly one arc between each pair of vertices is a \emph{tournament}. An arc $(u,v)\in A(D)$ is \emph{symmetric} if $(v,u)\in A(D)$; otherwise, it is \emph{asymmetric}. A digraph $D$ is \emph{quasi-transitive} if $u$ and $w$ are adjacent whenever $(u$, $v)$, $(v$, $w)\in A(D)$ and it is a \emph{local tournament} if there is exactly one arc between $u$ and $w$ whenever $(u$, $v)$, $(w$, $v)\in A(D)$ or $(v$, $u)$, $(v$, $w)\in A(D)$. For digraphs with no symmetric arcs, the intersection of quasi-transitive digraphs and local tournaments is precisely the class of tournaments (see Chapter~2 of~\cite{BanGut2009}). A digraph $D$ is \emph{acyclic} if it has no directed cycles.

Now, we will prove the main theorem.
\begin{theorem}
Let $G$ be a $2$-$\mathcal{M}$-closed graph. $G$ has an alternating Hamiltonian cycle if and only if it is color-connected and has an alternating cycle factor.
\end{theorem}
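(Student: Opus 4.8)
The forward implication requires no work: an alternating Hamiltonian cycle is in particular an alternating cycle factor (with a single cycle), and for any two distinct vertices $x,y$ the two arcs into which such a cycle splits are alternating $(x,y)$-paths whose first edges (the two cycle-edges at $x$) have distinct colours and whose last edges (the two cycle-edges at $y$) have distinct colours; hence $G$ is color-connected. For the converse I would argue by contradiction: assume $G$ is color-connected, fix an alternating cycle factor $\mathcal{F}=\{C_1,\dots,C_k\}$ with $k$ as small as possible, and suppose $k\ge 2$.

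The first task is to pin down how two cycles of $\mathcal{F}$ can meet. If there is a good pair of edges between $C_i$ and $C_j$, Proposition~\ref{prop:goodpair} merges them and contradicts minimality; so there is no good pair, and Corollary~\ref{3} then says that either the two cycles merge (again impossible) or every $x\in C_i$ is joined to every $y\in C_j$. The central structural lemma I would prove is that, once all these edges are present and no good pair exists, one of $C_i,C_j$ colour-dominates the other. Here Lemma~\ref{2} is the engine: from one edge one propagates the parallel edges all the way around both cycles, and the alternation of, say, $C_i$ together with the $2$-$\mathcal{M}$-closed hypothesis forces the colours on $G[I_{C_i}]$, on $G[P_{C_i}]$, and on the four bundles of edges running from $I_{C_i}$ and $P_{C_i}$ to $C_j$ to be exactly the ones in the definition of colour-domination. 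Since an alternating cycle has even length, $I_{C_i}$ and $P_{C_i}$ are both non-empty, which simultaneously prevents $C_i$ and $C_j$ from dominating each other and rules out the incompatible blue-/red-domination combinations; so exactly one of $C_i$, $C_j$ colour-dominates the other.

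Now encode this: let $D$ be the digraph on vertex set $\mathcal{F}$ with an arc $C_i\to C_j$ whenever $C_i$ colour-dominates $C_j$. Because $G$ is connected, any two cycles of $\mathcal{F}$ are joined by a path of $G$, hence (walking cycle by cycle along it) $D$ is connected; it has no symmetric arcs by the previous paragraph; and using the monochromatic patterns forced by colour-domination one checks that $D$ is quasi-transitive (if $C_a\to C_b\to C_c$, a suitable two-edge path through $C_b$ is monochromatic, so $C_a,C_c$ are adjacent in $G$, hence in $D$) and a local tournament (if $C_a,C_c$ both colour-dominate $C_b$, choosing the endpoints in the appropriate parity classes makes some two-edge path through $C_b$ monochromatic). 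By the fact recalled above about quasi-transitive digraphs and local tournaments, $D$ is therefore a tournament. Finally, colour-connectivity of $G$ forces $D$ to be strongly connected: otherwise its condensation would be a non-trivial transitive tournament, and the rigid colours imposed by the dominations across that cut would leave a suitably chosen vertex on one side with all its alternating paths to a suitably chosen vertex on the other side beginning with the same colour, contradicting color-connectivity.

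By Camion's theorem the strong tournament $D$ has a directed Hamiltonian cycle $C_{\pi(1)}\to\cdots\to C_{\pi(k)}\to C_{\pi(1)}$. The last step is to thread this into a single alternating cycle of $G$: whenever $C\to C'$, the complete bipartite colour pattern between $C$ and $C'$ — enlarged, via repeated use of the $2$-$\mathcal{M}$-closed property, with the further edges it forces inside and between the cycles — is flexible enough to carry an alternating path that sweeps up all of $V(C')$, enters $C'$ along an edge of a prescribed colour, and leaves along an edge of the complementary colour; splicing these sweeps consecutively around the directed Hamiltonian cycle of $D$ yields an alternating Hamiltonian cycle of $G$. This contradicts $k\ge 2$, so $k=1$, which is the desired alternating Hamiltonian cycle. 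The main obstacle is the middle part — turning ``all edges present, no good pair'' into colour-domination, and then keeping the colour bookkeeping exact while threading around $D$ so that no two consecutive edges share a colour — a long case analysis powered by Lemma~\ref{2} and the $2$-$\mathcal{M}$-closed hypothesis, essentially bootstrapping a single edge into the whole rigid structure.
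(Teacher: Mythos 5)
Your first half tracks the paper closely: good pairs via Proposition~\ref{prop:goodpair}, completeness of the edges between two cycles via Corollary~\ref{3}, the bootstrapping to colour-domination, and the auxiliary digraph $D$ on the cycles of a minimum factor, shown to be a tournament through quasi-transitivity and the local-tournament property. Where you diverge is the endgame, and that is where the gaps are. The paper does \emph{not} show $D$ is strong and apply Camion; it shows $D$ is \emph{acyclic}, by proving that a shortest directed cycle in $D$ would have length $3$ and that any directed triangle of dominations (monochromatic or not, after reversing cycles to normalise first-edge colours) can be threaded into a single alternating cycle on the union of the three vertex sets, contradicting minimality of the factor. An acyclic tournament has a source $C_1$ dominating every other cycle, and since all out-arcs of a vertex of $D$ carry the same colour, every vertex of $I_{C_1}$ is trapped: any alternating path from it starting with a red edge oscillates between $I_{C_1}$ and $P_{C_1}$ and never leaves $C_1$, contradicting colour-connectivity.

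Your route has two under-justified steps. First, ``$D$ not strong $\Rightarrow$ $G$ not colour-connected'': the trapped-vertex argument only works when the source strong component of $D$ is a \emph{single} cycle, because it needs one cycle to dominate everything outside itself in one colour. If the source component contains several cycles, different cycles of it may dominate the rest in different colours, and your ``rigid colours across the cut'' assertion does not obviously produce a vertex with no alternating escape of a given starting colour. Second, the threading of an arbitrary-length directed Hamiltonian cycle of $D$ into one alternating cycle of $G$ is asserted, not proved; even for triangles the paper needs two separate colour cases and careful choices of entry/exit parities and cycle reversals, and for longer cycles with arbitrary colour sequences on the arcs the bookkeeping is genuinely nontrivial. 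Note that the two gaps are complementary: if you could carry out the threading for every directed cycle of $D$ (even just triangles, which exist in any strong tournament on at least three vertices), you would conclude $D$ is acyclic, the source component becomes a singleton, and your first gap closes --- at which point you have reproduced the paper's proof. As written, though, both steps need to be supplied, and Camion's theorem is doing no work that the acyclicity argument would not do more cheaply.
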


\begin{proof}
If $G$ is a $2$-$\mathcal{M}$-closed graph and has an alternating Hamiltonian cycle, then $G$ is color-connected and has an alternating cycle factor. So, let $G$ be a $2$-$\mathcal{M}$-closed and color-connected graph. Given any alternating cycle factor of $G$ with at least two cycles, $C=\{C_1$, \ldots, $C_l \}$, we will construct a smaller alternating cycle factor. Since $G$ is connected, there exist two alternating cycles $C_1=(x_1$, \ldots, $x_n$, $x_1)$ and $C_2=(y_1$, \ldots, $y_m$, $y_1)$ with an edge between them. 
Suppose without loss of generality that $[x_1$, $y_1] \in E(G)$, it is blue and that $C_1$ and $C_2$ are appropriately labelled with respect to $[x_1$, $y_1]$. Proposition~\ref{prop:goodpair} implies that if there exists a good pair of edges, then we can merge both cycles, obtaining a smaller alternating cycle factor. So, we can assume that there is no good pair of edges and Corollary~\ref{3} implies that $[x$, $y] \in E(G)$ for every $x \in C_1$ and $y \in C_2$. Also, this implies that consecutive parallel edges between $C_1$ and $C_2$ have different color. Now, we analyze the color structure of the set of edges between $x_1$ and $C_2$. 

\vspace{\topsep}
\noindent\textit{Case 1.} There are blue and red edges between $x_1$ and $I_{C_2}$. We can suppose that $[x_1$, $x_2]$, $[y_1$, $y_2]$, $[x_1$, $y_1]$ are blue, and $[x_1$, $y_3]$ is red. Since parallel edges between $C_1$ and $C_2$ have alternating colors (otherwise there exists a good pair of edges), we have that
\begin{displaymath}
 (y_1, y_2, x_2, x_1, y_3, y_4, x_4, x_3, \ldots, y_n, x_n, x_{n-1}, y_{n+1}, y_{n+2}, \ldots, y_m, y_1)\text{,}
\end{displaymath}
when $n \leq m$, or
\begin{multline*}
(y_1, y_2, x_2, x_1, y_3, y_4, x_4, x_3, \ldots, y_{m-2}, x_{m-2}, x_{m-3}, y_{m-1}, y_m, x_n, x_{n-1}, \ldots, x_m, x_{m-1}, y_1)\text{,} 
\end{multline*}
when $n > m$ (notice that arc $[x_{m-1}$, $y_1]$ is parallel to $[x_{1}$, $y_3]$), is an alternating cycle with vertex set $V(C_1) \cup V(C_2)$ (see Figure~\ref{fig15}).
\begin{figure}
  \centerline{\includegraphics[height=2.7cm]{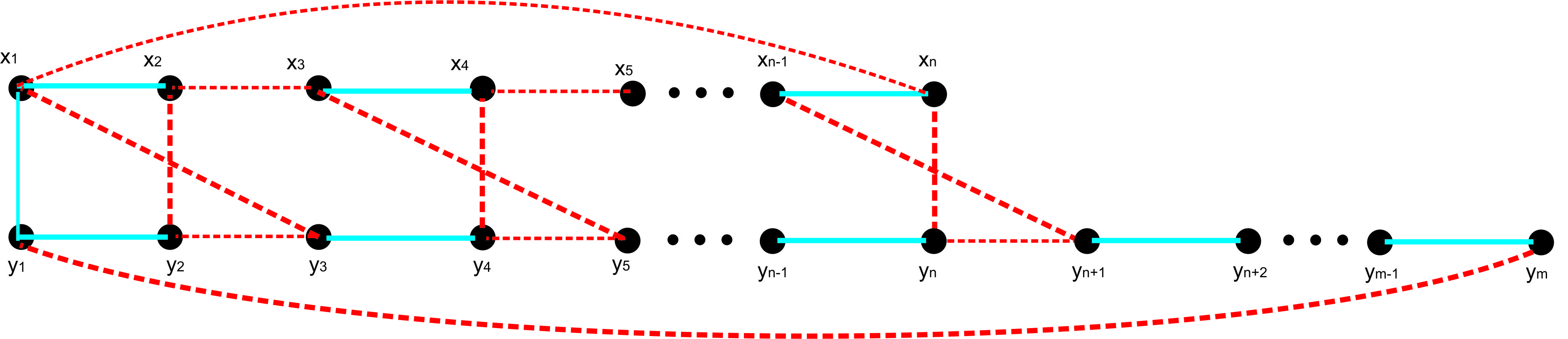}}
  \caption{Case 1.}
\label{fig15}
\end{figure}
 
\vspace{\itemsep}
\noindent\textit{Case 2.} There are blue and red edges between $x_1$ and $P_{C_2}$. Without loss of generality, assume that $[x_{1}$, $y_{k}]$ is a red edge and $[x_{1}$, $y_{k+2}]$ is a blue edge, for some even integer $k$. Recall that $[x_{1}$, $x_{2}]$ and $[y_{k+1}$, $y_{k+2}]$ are blue edges. Since parallel edges between $C_1$ and $C_2$ have alternating colors, we have that $[x_{2}$, $y_{k+1}]$ is a blue edge. Therefore, $[x_{1}$, $x_{2}]$, $[y_{k+1}$, $y_{k+2}]$ is a good pair of edges.

\vspace{\itemsep}
\noindent\textit{Case 3.} All the edges between $x_1$ and $I_{C_2}$ and all the edges between $x_1$ and $P_{C_2}$ have the same color. First assume that the edges between $x_1$ and $C_2$ are not monochromatic, this implies that the edges between $x_1$ and $I_{C_2}$ are blue and the edges between $x_1$ and $P_{C_2}$ are red (see Figure~\ref{fig16b}). Since the colors of consecutive parallel edges between $C_1$ and $C_2$ are alternating, we have that all the edges between $x_2$ and $I_{C_2}$ are blue and the edges between $x_2$ and $P_{C_2}$ are red. And so on. Therefore, all the edges between $y_1$ and $C_1$ are blue. Interchanging $C_1$ and $C_2$, we can always assume that all the edges between $x_1$ and $C_2$ have the same color.
\begin{figure}
  \centerline{\includegraphics[height=3.5cm]{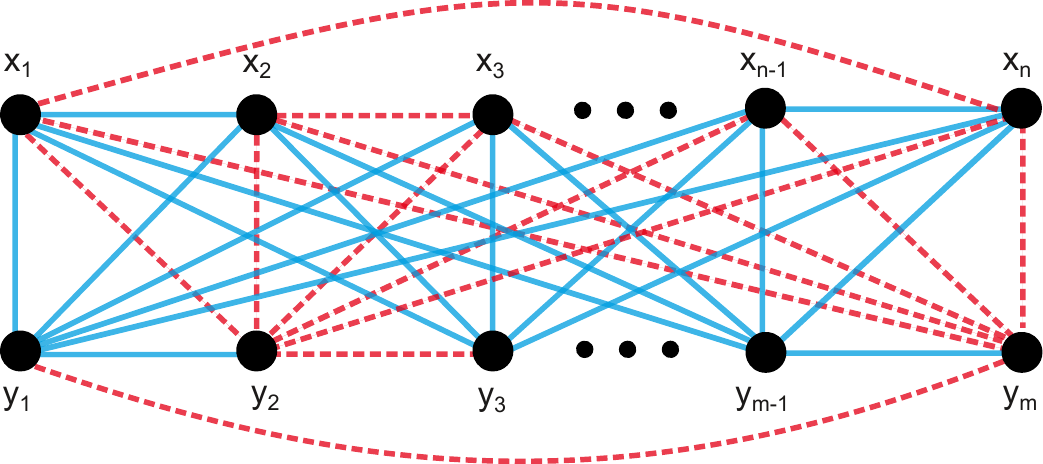}}
  \caption{Case 3, first assumption.}
\label{fig16b}
\end{figure}

Suppose without loss of generality that all the edges between $x_1$ and $C_2$ are blue (see Figure~\ref{fig16}). Since the colors of the parallel edges between $C_1$ and $C_2$ are alternating, the edges between $x_2$ and $C_2$ are all red. Therefore all the edges between $P_{C_1}$ and $C_2$ are red and all the edges between $I_{C_1}$ and $C_2$ are blue.
\begin{figure}
  \centerline{\includegraphics[height=3.5cm]{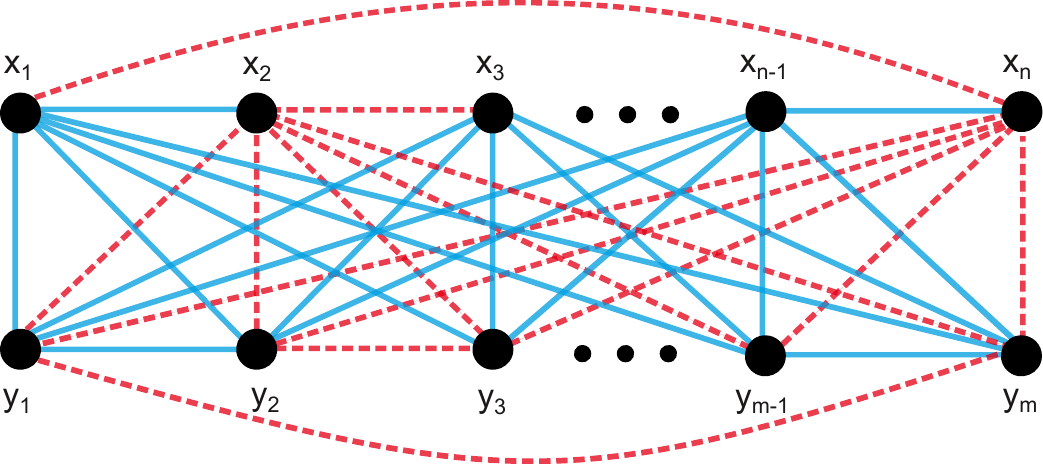}}
  \caption{Case 3, second assumption.}
\label{fig16}
\end{figure}

These conditions fix the edges between $C_1$ and $C_2$. Notice that $(x_{2i+1}$, $y_j$, $x_{2k+1})$ and $(x_{2i}$, $y_j$, $x_{2k})$ are monochromatic paths for every $i$, $k \in \mathbb{Z}$, where $i$ and $k$ are taken modulo $n$. Therefore, $[x_{2i+1}$, $x_{2k+1}]$, $[x_{2i}$, $x_{2k}] \in E(G)$. Now, we will analyze the structure inside the cycle $C_1$.

\vspace{\topsep}
\noindent\textit{Case 3.1} $[x_{2k_1+1}$, $x_{2k_2+1}]$ is a red edge or $[x_{2k_3}$, $x_{2k_4}]$ is a blue edge (recall that $x_{2k_1+1}$, $x_{2k_2+1} \in I_{C_1}$ and $x_{2k_3}$, $x_{2k_4} \in P_{C_1}$).

Suppose that $2k_1+1 < 2k_2+1$. If $[x_{2k_1+1}$, $x_{2k_2+1}]$ is a red edge, then there exist an alternating cycle merging both cycles $(y_1$, $x_{2k_1}$, $x_{2k_1-1}$, \ldots,~$x_{2k_2+1}$, $x_{2k_1+1}$, $x_{2k_1+2}$, \ldots,~$x_{2k_2}$, $y_m$, $y_{m-1}$, \ldots,~$y_1)$. Analogously if $[x_{2k_3}$, $x_{2k_4}]$ is blue.

\vspace{\itemsep}
\noindent\textit{Case 3.2} All the edges between vertices in $I_{C_1}$ are blue, and all the edges between vertices in $P_{C_1}$ are red. Therefore, $C_1$ blue-dominates $C_2$.

Notice that if $C_i$ does not color-dominate $C_j$ and $C_j$ does not color-dominate $C_i$, we can reduce the number of cycles in the alternating cycle factor. Therefore, in a minimum alternating cycle factor, $C_i$ color-dominates $C_j$ or $C_j$ color-dominates $C_i$ for every pair of adjacent cycles in such factor.

So, given an alternating cycle factor $C$, we can reduce the number of cycles in $C$ or between every pair $C_i$ and $C_j$ of adjacent cycles in $C$, $C_i$ color-dominates $C_j$ or $C_j$ color-dominates $C_i$.

Let $C=\{C_1$, \ldots, $C_l \}$ be a minimum alternating cycle factor of $G$, where $C_i=(x_{1,i}$, $x_{2,i}$, $x_{3,i}$, \ldots, $x_{m_{i},i}$, $x_{1,i})$, where $m_{i}$ is the length of $C_{i}$, for every $i \in \{1$, \ldots, $l\}$. We define the colored digraph $D_{G_C}^*$ as follows: 
\begin{itemize}
 \item $V(D_{G_C}^*)= C$, and
 \item $(C_i$, $C_j) \in A(D_{G_C}^*)$ if and only if $C_i$ color-dominates $C_j$. Moreover, $c[C_i$, $C_j]=c[x_{1,i}$, $x_{1,j}]$. This is, arcs have the same color as the color-domination between $C_i$ and $C_j$.
\end{itemize}

Let us suppose that $C_{1}$ blue-dominates $C_{2}$ and that the first edge of $C_{1}$ (\textit{i.e.}, $[x_{1,1}$, $x_{2,1}]$) is red. The cycle $C_{1}^{rev}=(x_{1,1}$, $x_{m_{i},1}$, \ldots, $x_{2,1}$, $x_{1,1})$ is the \emph{reverse} of $C_{1}$. Notice that $I_{C_{1}}=I_{C_{1}^{rev}}$ and $P_{C_{1}}=P_{C_{1}^{rev}}$. Hence, $C_{1}^{rev}$ also blue-dominates $C_{2}$ and the first edge of $C_{1}^{rev}$ is blue. Therefore, without loss of generality, we can always choose the color of the first edge of each cycle.

Now, we will prove that  $D_{G_C}^*$ is an acyclic tournament. Let $C'=(C_1$, $C_2$, \ldots, $C_p$, $C_1)$ be a cycle of minimum length in $V(D_{G_C}^*)$. If $p = 2$, then all  the edges between $x_{1,1}$ and $C_2$ are monochromatic, and all the edges between $x_{1,2}$ and $C_1$ are also monochromatic; moreover, all those edges have the same color. Hence, if $C_1$ blue-dominates $C_2$, then $C_2$ blue-dominates $C_1$. Assume without loss of generality that the first edges of $C_{1}$ and $C_{2}$ are blue. Therefore $[x_{1,1}$, $x_{2,1}]$ and $[x_{1,2}$, $x_{2,2}]$ is a good pair of edges between $C_1$ and $C_2$, so we can merge both alternating cycles and reduce the number of cycles in $C$, which is a contradiction. Thus, we can assume that $p \geq 3$. 

We will show that $p=3$. Assume that $p > 3$. Since $(C_1$, $C_2)$ and $(C_2$, $C_3)$ are arcs in $D_{G_C}^*$, we have that $C_1$ color-dominates $C_2$ and $C_2$ color-dominates $C_3$. Therefore, there exists a monochromatic path $(x_{a,1}$, $x_{b,2}$, $x_{c,3})$ in $G$ and we have an edge between $C_1$ and $C_3$. Moreover, there exists an arc between $C_1$ and $C_3$ in $D_{G_C}^*$. If $(C_3$, $C_1) \in A(D_{G_C}^*)$, we are done. So we have that $(C_1$, $C_3)$ is an arc in $A(D_{G_C}^*)$, which is a contradiction with the minimality of $C'$. Let $(C_1$, $C_2$, $C_3$, $C_1)$ be the minimum cycle in $D_{G_C}^*$. We consider two possibilities, either $(C_1$, $C_2$, $C_3$, $C_1)$ is monochromatic or not.

\vspace{\topsep}
\noindent\textit{Case 3.2.1} $(C_1$, $C_2$, $C_3$, $C_1)$ is monochromatic. Assume without loss of generality that $C_{i}$ blue-dominates $C_{i+1}$ and, moreover, that $[x_{1,i}$, $x_{2,i}]$ is a blue edge, where the subscripts are taken modulo $3$. Recall that all the edges between $x_{1,i}$ and $C_{i+1}$ are blue and all the edges between $x_{m_{3},3}$ and $C_{1}$ are red. Hence, $(x_{1,1}$, $x_{2,1}$, \ldots, $x_{m_{1},1}$, $x_{1,2}$, $x_{2,2}$, \ldots, $x_{m_{2},2}$, $x_{1,3}$, $x_{2,3}$, \ldots, $x_{m_{3},3}$, $x_{1,3}$, $x_{2,3}$, \ldots, $x_{m_{3},3}$, $x_{1,1})$ is an alternating cycle.

\vspace{\itemsep}
\noindent\textit{Case 3.2.2} $(C_1$, $C_2$, $C_3$, $C_1)$ is not monochromatic. Assume without loss of generality that $C_{1}$ blue-dominates $C_{2}$, $C_{2}$ blue-dominates $C_{3}$ and $C_{3}$ red-dominates $C_{1}$. Moreover, assume that $[x_{1,i}$, $x_{2,i}]$ is a blue edge, for $i \in \{1$, $2\}$ and a red edge, for $i = 3$. Recall that all the edges between $x_{1,i}$ and $C_{i+1}$ are blue, for $i \in \{1$, $2\}$ and all the edges between $x_{1,3}$ and $C_{1}$ are red. Hence, $(x_{1,1}$, $x_{2,1}$, \ldots, $x_{m_{1},1}$, $x_{1,2}$, $x_{2,2}$, \ldots, $x_{m_{2},2}$, $x_{m_{3},3}$, $x_{m_{3}-1,3}$, \ldots, $x_{2,3}$, $x_{1,3}$, $x_{1,1})$ is an alternating cycle.

\vspace{\itemsep}
In both cases, we obtain an alternating cycle with vertex set $V(C_1) \cup V(C_2) \cup V(C_3)$, which is a contradiction to the choice of $C$. Therefore, $D_{G_C}^*$ is an acyclic digraph. Moreover, the previous argument also shows that $D_{G_C}^*$ is a quasi-transitive digraph.

Now, let us show that for $C_i$, $C_j$ and $C_k$, if we have $(C_i$, $C_j)$ and $(C_i$, $C_k)$ are arcs in $D_{G_C}^*$. Then $c[C_i$, $C_j] = c[C_i$, $C_k]$. Assume on the contrary that $c[C_i$, $C_j] \neq c[C_i$, $C_k]$. Suppose without loss of generality that $C_i$ red-dominates $C_j$ and that $C_i$ blue-dominates $C_k$. The definition of domination implies that all the edges between vertices in $I_{C_i}$ are red and are also blue, which is a contradiction.

Let $(C_i$, $C_j)$ and $(C_i$, $C_k)$ be arcs in $D_{G_C}^*$. We know that $c[C_i$, $C_j] = c[C_i$, $C_k]$, so  $(x_{1,j}$, $x_{1,i}$, $x_{1,k})$ is monochromatic path. Therefore, $(C_j$, $C_k)$ or $(C_k$, $C_j)$ is in $A(D_{G_C}^*)$. Analogously, if $(C_j$, $C_i)$ and $(C_k$, $C_i)$ are arcs in $D_{G_C}^*$, we have that  $(C_j$, $C_k)$ or $(C_k$, $C_j)$ is in $A(D_{G_C}^*)$. Therefore $D_{G_C}^*$ is a local tournament. 

Since $G$ is a connected graph and $D_{G_C}^*$ is both a quasi-transitive digraph and a local tournament, we have that $D_{G_C}^*$ is an acyclic tournament (recall that there is no symmetric arcs in $D_{G_C}^*$). Therefore, there exists a vertex of out-degree $l-1$. Assume without loss of generality that this vertex is $C_1$, so we have $(C_1$, $C_i)\in A[D_{G_C}^*]$ for every $i\in \{2$, $3$, \ldots, $l\}$. Moreover, all these arcs have the same color, let it be blue. Therefore $C_1$ blue-dominates every other cycle in $D_{G_C}^*$, as it is depicted in Figure~\ref{NC}. Therefore, $G$ is not color-connected (since every vertex in $I_{C_1}$ does not have any alternating path starting with a red edge to any vertex in $V(G-C_1)$), which is a contradiction.
\begin{figure}
\begin{minipage}{1\linewidth}
\begin{minipage}{0.47\linewidth}
  \centering
  \includegraphics[height=5cm]{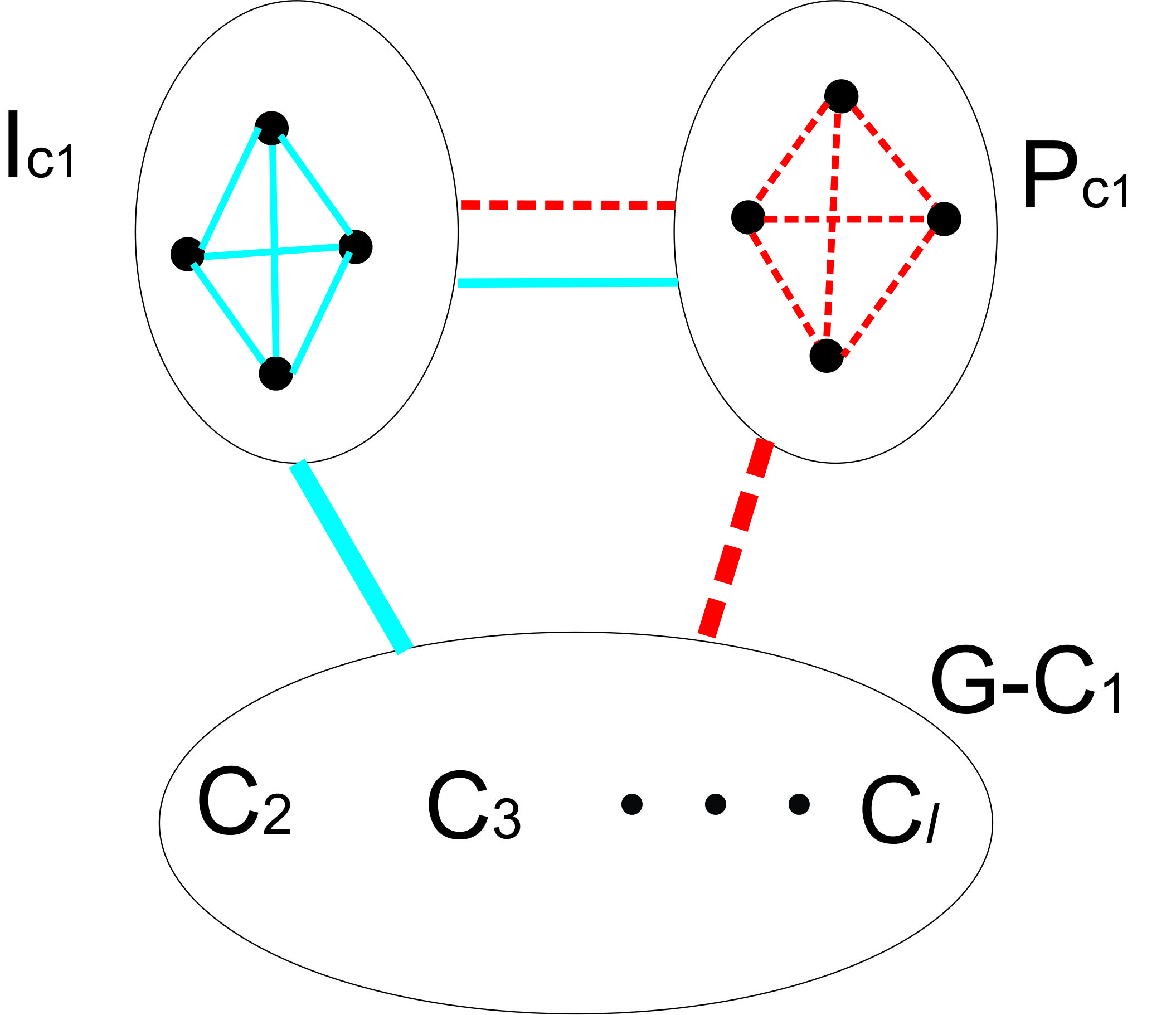}
  \caption{$G$ is not color-connected.}
\label{NC}
\end{minipage}
\hfill
\begin{minipage}{0.47\linewidth}
\centering
 \includegraphics[height=4cm]{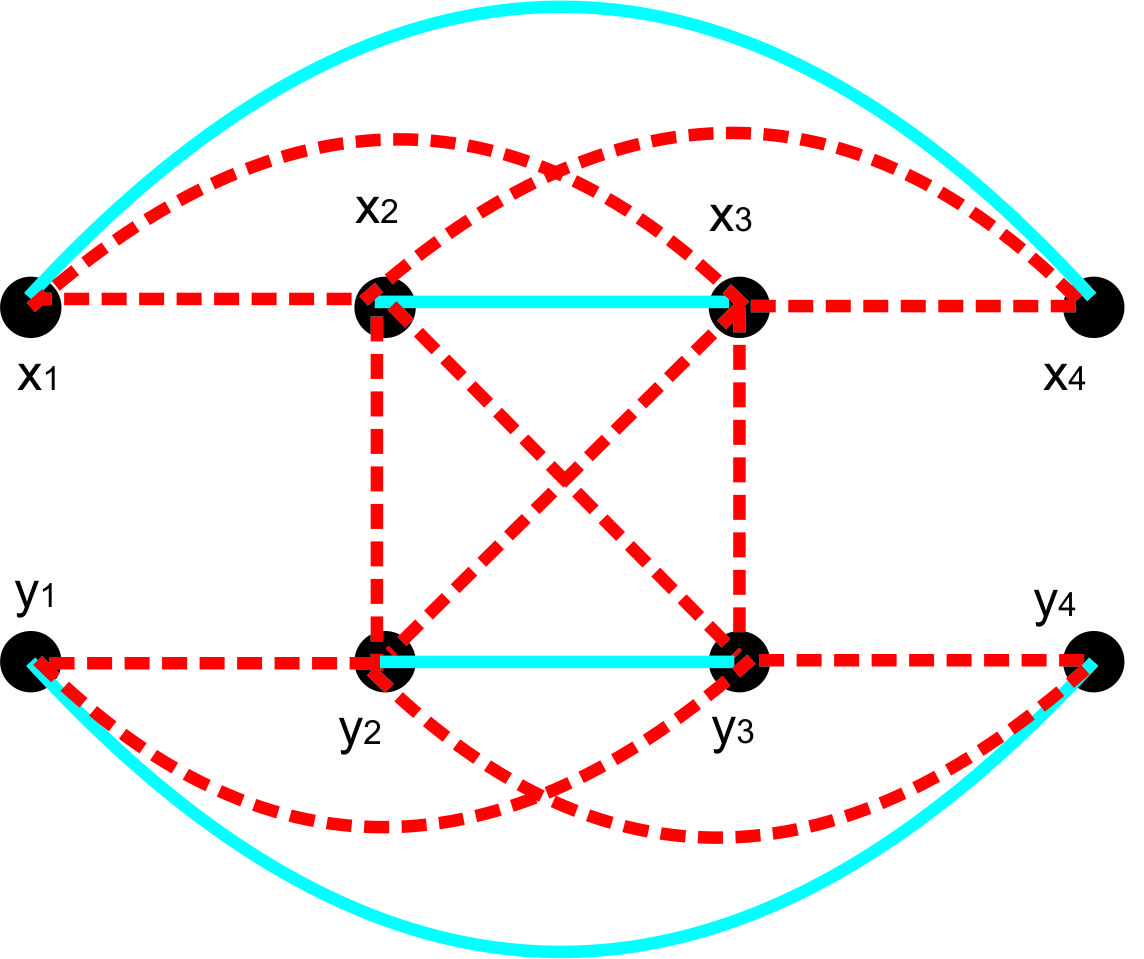}
 \caption{Counterexample.}
 \label{Counterexample}
\end{minipage}
\end{minipage}
\end{figure}
\end{proof}

To be color-connected and to have an alternating cycle factor are necessary conditions for a graph to have an alternating Hamiltonian cycle. Nevertheless, these conditions are insufficient for the $2$-$\mathcal{NM}$-closed graphs. In Figure~\ref{Counterexample}, we show a $2$-$\mathcal{NM}$-closed and color-connected graph,  with an alternating cycle factor and with no alternating Hamiltonian cycles at all.

Moreover, we can construct a family of such graphs. If we take two alternating cycles $C_1$, $C_2$ in which all the edges inside of them are red, and we ``paste'' the cycles taking $x_1$, $x_2 \in V(C_1)$ and $y_1$, $y_2 \in V(C_2)$ such that $[x_1$, $x_2]$, $[y_1$, $y_2]$ are blue, and we only add the red edges $[x_1$, $y_1]$, $[x_2$, $y_2]$, $[x_1$, $y_2]$ and $[x_2$, $y_1]$, then we obtain a graph which is color-connected and has alternating cycle factor, but that cannot be merged in order to obtain an alternating cycle (see Figure~\ref{Family}).
\begin{figure}
  \centerline{\includegraphics[height=4cm]{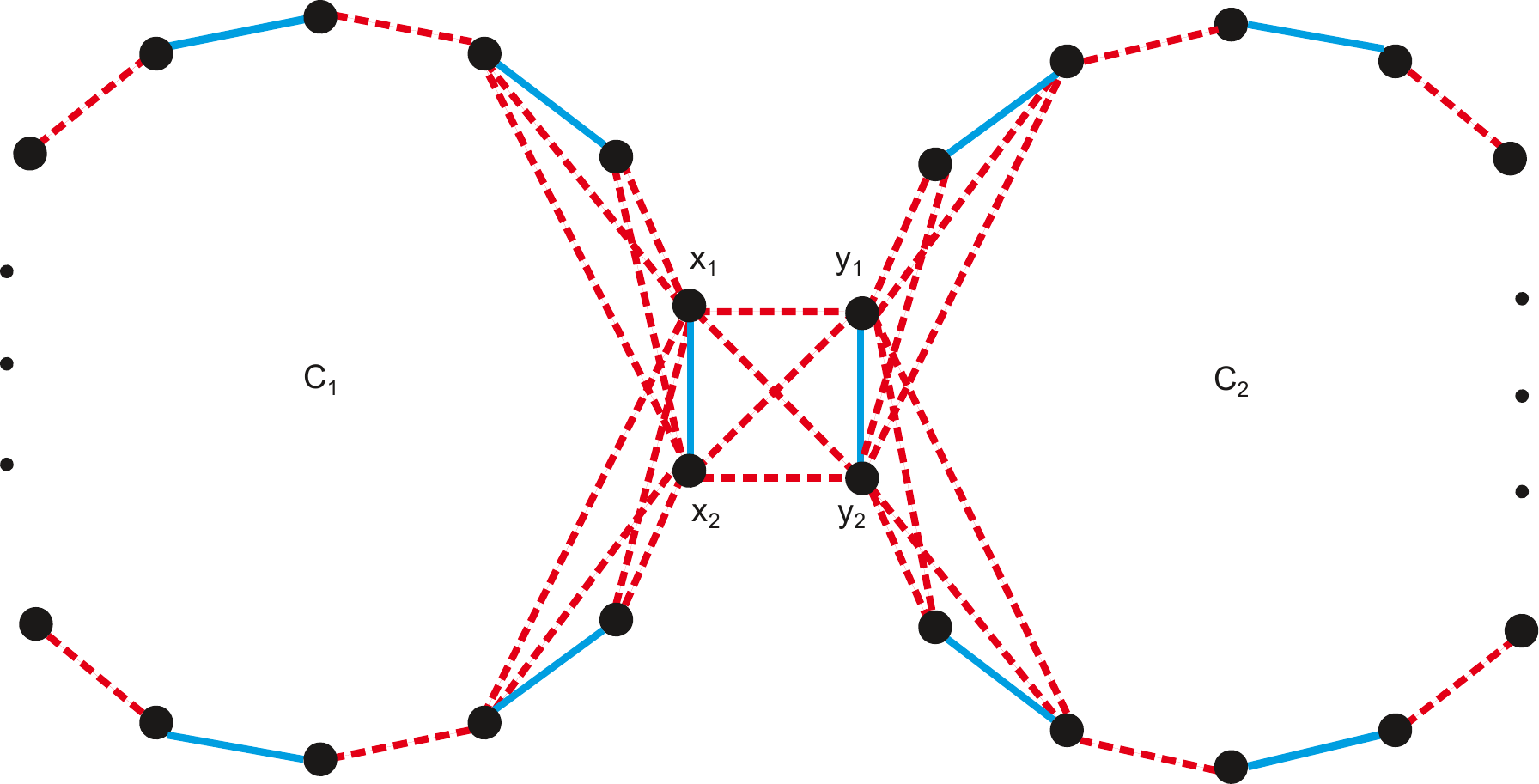}}
  \caption{Family with no alternating Hamiltonian cycles at all.}
\label{Family}
\end{figure}

\section*{Acknowledgments}
The authors are very grateful to the anonymous referees for a thorough review and their helpful suggestions which improved substantially the writing of this paper.

\bibliographystyle{abbrvnat}
\bibliography{bib-alternating}
\label{sec:biblio}

\end{document}